\documentclass{article}

\usepackage{amsmath,amssymb,amsthm,bm}
\usepackage{algorithm,algorithmic}
\usepackage{authblk}
\usepackage[a4paper]{geometry}
\usepackage{graphicx}
\usepackage[bookmarksnumbered,pdfencoding=auto,psdextra]{hyperref}
\usepackage{comment}
\usepackage{xcolor}
\usepackage{multirow}
\usepackage{setspace}

\graphicspath{{fig/}}

\newenvironment{keywords}{\medskip\textbf{Keywords:}}{}
\newenvironment{AMS}{\medskip\textbf{AMS subject classifications (2020).}}{}

\theoremstyle{plain}
\newtheorem{theorem}{Theorem}
\newtheorem{lemma}{Lemma}
\newtheorem{corollary}{Corollary}
\newtheorem{example}{Example}

\theoremstyle{remark}
\newtheorem{remark}{Remark}

\newcommand*{\set}[1]{\left\lbrace#1\right\rbrace}

\newcommand*{\trans}{^{\top}}
\newcommand*{\herm}{^*}

\newcommand*{\abs}[1]{\bigl\lvert#1\bigr\rvert}
\newcommand*{\absbig}[1]{\Bigl\lvert#1\Bigr\rvert}

\newcommand*{\mi}{\mathrm i}
\newcommand*{\mj}{\mathrm j}
\newcommand*{\mk}{\mathrm k}
\def\adots{\mathinner{\mkern2mu\raise1pt\hbox{.}\mkern2mu
    \raise4pt\hbox{.}\mkern2mu\raise7pt\hbox{.}\mkern1mu}}

\DeclareMathOperator{\fl}{f{}l}

\newcommand*{\macheps}{\bm u}
\newcommand*{\op}{\circ}
\newcommand*{\bigO}{O}
\newcommand*{\gammanw}{\gamma_n^{\mathrm w}}
\newcommand*{\gammanh}{\gamma_n^{\mathrm h}}
\newcommand*{\gamman}{\gamma_n}

\newcommand*{\epsadd}{\varepsilon_{\mathtt{add}}}

\newcommand*{\epsdiv}{\varepsilon_{\mathtt{div}}}
\newcommand*{\epsdot}{\varepsilon_{\mathtt{dot}}}
\newcommand*{\epsgemm}{\varepsilon_{\mathtt{gemm}}}
\newcommand*{\epsgemmadd}{\varepsilon_{\mathtt{gemm-add}}}

\newcommand*{\epstrfvec}{\varepsilon_{\mathtt{getf2}}}
\newcommand*{\epstrf}{\varepsilon_{\mathtt{getrf}}}
\newcommand*{\epsmult}{\varepsilon_{\mathtt{mult}}}

\newcommand*{\epstrsv}{\varepsilon_{\mathtt{trsv}}}

\newcommand{\Acal}{\mathcal{A}}
\newcommand{\Acalhat}{\widehat{\mathcal{A}}}

\newcommand{\LL}{\mathcal{L}}
\newcommand{\LLhat}{\widehat{\mathcal{L}}}

\newcommand{\UU}{\mathcal{U}}
\newcommand{\UUhat}{\widehat{\mathcal{U}}}

\title{A framework for trackable and modular rounding error analysis}
\author[1]{Erin Carson}
\author[1]{Yuxin Ma}
\author[2,3]{Meiyue Shao}
\affil[1]{Department of Numerical Mathematics, Faculty of Mathematics and Physics, Charles University, Sokolovsk\'{a} 49/83, 186 75 Praha 8, Czechia}
\affil[2]{School of Data Science, Fudan University, Shanghai 200433, China}
\affil[3]{Shanghai Key Laboratory for Contemporary Applied Mathematics, Fudan
University, Shanghai 200433, China}

\begin{document}

\maketitle

\begin{abstract}
Robust and meaningful rounding error analysis is becoming increasingly challenging as modern
numerical computations involve a rapidly evolving variety of arithmetic formats,
precisions, and computational kernels.
We propose a trackable and modular framework for rounding error analysis,
together with a new notation for bounding accumulated rounding errors.
The analysis is trackable in the sense that the origin and propagation of
individual error terms remain explicit in the final result, and modular in
the sense that local error bounds can be replaced according to the arithmetic,
precision, or computational kernel without repeating the structural analysis.
We also introduce the notation \(\gamman=(1-\macheps)^{-n}-1\),
which provides an alternative to the classical
\(\gammanh=n\macheps/(1-n\macheps)\) notation.
A key feature is that our notation does not require the restriction \(n\macheps<1\) and conveniently handles
errors arising from division.
Using block LU factorization as a  representative example,
we show that the framework distinguishes the error sources of different
algorithmic variants and permits improved kernels, such as compensated
summation and accurate matrix multiplication, to be incorporated directly
into the resulting error bounds.
\medskip

\begin{keywords}
Rounding error analysis, numerical linear algebra, matrix computations
\end{keywords}

\begin{AMS}
65G50
\end{AMS}
\end{abstract}

\section{Introduction}
\label{sec:introduction}
Rounding errors are an inevitable consequence of floating-point arithmetic
and play a fundamental role in determining the reliability of numerical
computations.
Since the pioneering work of Wilkinson~\cite{Wilkinson1963}, rounding error
analysis has provided a theoretical basis for understanding numerical
stability and for assessing the reliability of computed results.
The arithmetic environment in which numerical algorithms are executed,
however, is becoming increasingly heterogeneous.
Recent developments include new IEEE and non-IEEE arithmetic formats, such as
the parameterized family of formats being developed in the IEEE P3109 draft
standard~\cite{FitzgibbonEtAl2026} and the Open Compute Project (OCP)
Microscaling (MX) formats~\cite{OCPMX2023,RouhaniEtAl2023}, mixed-precision
algorithms~\cite{HighamMary2022}, software emulation of custom low-precision
arithmetic~\cite{HP2019}, stochastic rounding~\cite{CrociEtAl2022}, and
specialized hardware such as GPU tensor cores~\cite{BlanchardEtAl2020}.
These developments change not only the unit round-off, but also the numerical
formats, rounding modes, accumulation rules, and precisions assigned to
individual operations.
Consequently, an error analysis developed for one arithmetic model or
implementation often has to be substantially revisited when any of these
ingredients changes.
This raises a basic question: how can rounding error analysis be organized so
that it remains meaningful, interpretable, and reusable as arithmetic and
hardware continue to evolve?

We advocate a framework for \emph{trackable and modular rounding error analysis}.
By trackability, we mean that local errors are retained according to the
operations that generate them and are propagated explicitly to the final
result.
Thus, each term in the final error representation can be traced back to the
operation or computational kernel from which it originates.
By modularity, we mean that the structural propagation of these errors is
separated from the particular bounds imposed on the local errors.
Once the structural analysis has been established, the local error bounds can
be replaced according to the arithmetic, precision, kernel, or implementation
being used, without repeating the global derivation.
For example, the same structural analysis can incorporate bounds for standard
or compensated summation, standard or Ozaki-style accurate matrix
multiplication, and different mixed-precision kernels.
In short, trackability concerns the provenance of the error, whereas
modularity concerns the substitutability of the local error bounds.
This distinction is important because a compact final bound alone may conceal
which part of a computation is responsible for the observed loss of accuracy.
Two algorithms may have final bounds of comparable size while their errors
originate from entirely different computational components.
Identifying these sources is essential when one seeks to repair an unstable
algorithm, improve a particular kernel, or selectively increase the precision
of only the critical operations.

Wilkinson emphasized this diagnostic role of error analysis, describing its
purpose as revealing why an algorithm is unstable and thereby ``to expose
what sort of change is necessary to make it stable''
\cite{Wilkinson1974}.
Recent analyses provide concrete examples of this principle.
In Householder orthogonalization with a nonstandard inner product, Shao's
analysis isolates the normalization of the Householder vectors as the
critical source governing the loss of orthogonality~\cite{Shao2023}.
In the backward error analysis of $s$-step GMRES, Carson and Ma separate the
effects of orthogonalization and the least-squares solve, making explicit how
the orthogonalization process influences the final backward error
\cite{CM2025}.
More recently, Burke, Carson, and Ma used a modular GMRES framework
\cite{BHMV2024} to analyze the numerical stability of sketched GMRES,
showing how different algorithmic components can be analyzed and replaced
within a common structural framework~\cite{BCM2026}.
These examples show that preserving error provenance while separating
structural propagation from kernel-specific bounds can make error analysis a
tool not only for certifying stability, but also for diagnosing, modifying,
and reusing numerical algorithms.

The same changes in arithmetic that motivate trackable analysis also make the
dimension dependence of rounding error bounds increasingly important in
practice.
In particular, the widespread use of low-precision hardware and the
proliferation of low-precision formats mean that the influence of the problem
dimension can no longer routinely be regarded as small.
A large part of modern rounding error analysis is expressed using Higham's
widely adopted \(\gammanh\)-notation~\cite{Higham2002}, that is,
\begin{equation}
\label{eq:bound1}
\gammanh=\frac{n\macheps}{1-n\macheps},
\end{equation}
which requires the assumption \(n\macheps<1\).
The restriction \(n\macheps<1\) becomes increasingly relevant when large
problem dimensions are combined with low-precision arithmetic.
When \(n\macheps\geq 1\), bounds expressed through \(\gammanh\) are no longer
applicable, even though the underlying computation and its rounding errors
remain well defined.
Thus, the standard notation can cease to provide a useful error bound exactly
in regimes that are becoming increasingly common in low-precision computing.

It is worth noting that Wilkinson's original
framework~\cite{Wilkinson1963} does not require the assumption
\(n\macheps<1\).
Its basic quantity is
\begin{equation}
\label{eq:bound0}
\gammanw=(1+\macheps)^n-1.
\end{equation}
This notation can describe the accumulation of multiplicative rounding errors
without any restriction on \(n\macheps\).
However, it does not provide an equally convenient mechanism for absorbing
rounding errors introduced by division.
In particular, rounding error analysis involving division naturally produces
factors such as \(1/(1-\macheps)\).
These factors cannot be absorbed into Wilkinson's framework simply by
increasing the exponent of \((1+\macheps)\), since
\(1/(1-\macheps)>1+\macheps\) by one, as demonstrated in
Example~\ref{example:div} in Section~\ref{sec:model}.
Consequently, analyses that involve division can lead to increasingly
cumbersome expressions when written solely in terms of \(\gammanw\).

To remove the restriction \(n\macheps<1\) while retaining a convenient way to
treat division-induced errors, we introduce a new notation
\begin{equation}
\label{eq:bound}
\gamman=(1-\macheps)^{-n}-1.
\end{equation}
Whenever \(n\macheps<1\), this quantity satisfies
\(\gammanw<\gamman\leq\gammanh\), with equality in the second inequality
only for \(n=1\).
In~\cite{O1996}, O'Cinneide adopted \((1-\macheps)^{-n}\) as an upper bound 
on products of rounding error factors, but did not introduce it as part of a 
general notation for rounding error analysis.
This notation \(\gamman\) was first used in our earlier work~\cite{Sai2022}, 
where we demonstrated that it offers an effective framework for rounding 
error analysis.
The proposed notation preserves Wilkinson's ability to describe accumulated
rounding errors without requiring \(n\macheps<1\), while retaining much of the
algebraic simplicity that has made the \(\gammanh\)-notation attractive in
practical error-bound derivations.
In particular, it provides a convenient mechanism for treating
rounding-error factors induced by division and facilitates the derivation of
compact error bounds for both standard- and mixed-precision algorithms.

The new notation and the trackable rounding error analysis framework play
complementary roles in the methodology developed here.
The former provides a convenient way to bound accumulated rounding errors
without the restriction \(n\macheps<1\).
The latter is trackable because it preserves the source and propagation of
individual errors, and modular because the corresponding local bounds can be
replaced without changing the structural derivation.
This separation makes it possible to reuse the same analysis under different
arithmetic models, precision assignments, and implementations, while also
revealing how changes to individual computational kernels affect the final
result.

The remainder of this paper is organized as follows.
In Section~\ref{sec:model}, we present the proposed notation and establish its
fundamental properties.
In Section~3, we illustrate the trackable rounding error analysis framework
using LU factorization as a representative example.

\section{Basic Rounding Model}
\label{sec:model}

In order to perform a rounding error analysis of an algorithm, we need to 
make some assumptions regarding the accuracy of the basic arithmetic 
operations.
There are two standard models:
\begin{equation}
\label{eq:model0}
\fl(\alpha\op\beta)=(\alpha\op\beta)(1+\delta),
\qquad \lvert\delta\rvert\leq\macheps,
\end{equation}
and
\begin{equation}
\label{eq:model1}
\fl(\alpha\op\beta)=\frac{\alpha\op\beta}{1+\delta},
\qquad \lvert\delta\rvert\leq\macheps,
\end{equation}
for any finite floating-point numbers \(\alpha\), \(\beta\in\mathbb R\) and
\(\op\in\set{+,-,\times,/}\).

These standard models provide a simple and effective description of the effect 
of a single floating-point operation.
However, practical numerical algorithms typically involve a large number of 
arithmetic operations, each introducing its own rounding error. 
Consequently, the direct application of the standard model often leads to 
expressions containing many factors of the form \((1+\delta_i)\), where each 
\(\delta_i\) satisfies \(\abs{\delta_i}\le \macheps\).
As the number of operations increases, such expressions quickly become cumbersome and difficult to manipulate.
For example, after \(n\) successive floating-point operations, one 
frequently encounters products of the form
\[
(1+\delta_1)(1+\delta_2)\cdots(1+\delta_n).
\]
For both Models~\eqref{eq:model0} and~\eqref{eq:model1}, this product can be
expressed in the form \(1+\theta_n\), where
\[
(1+\delta_1)(1+\delta_2)\cdots(1+\delta_n)=1+\theta_n,
\]
with \(\abs{\theta_n}\leq (1+u)^n-1=\gammanw<(1-u)^{-n}-1=\gamman\leq n\macheps/(1-n\macheps)= \gammanh\)
for Model~\eqref{eq:model0}, and
\[
\frac{1}{(1+\delta_1)(1+\delta_2)\cdots(1+\delta_n)}=1+\theta_n,
\]
with \(\abs{\theta_n}\leq (1-u)^{-n}-1=\gamman< n\macheps/(1-n\macheps)= \gammanh\)
for Model~\eqref{eq:model1}.
It is important to note that the upper bound on \(\abs{\theta_n}\) associated with \(\gammanh\) requires \(n\macheps<1\).
Observe that, for Model~\eqref{eq:model1}, the inequality
\(\abs{\theta_n}\leq (1-u)^{-n}-1=\gamman\) is sharp, so any bound strictly smaller than \(\gamman\), such as \(\gammanw\), is not valid.
Consequently, Wilkinson's analysis applies only to Model~\eqref{eq:model0}, 
whereas our \(\gamman\)-based bound and Higham's analysis are applicable to 
both Models~\eqref{eq:model0} and~\eqref{eq:model1}.
Besides \(\gammanw\) and \(\gammanh\), another reasonable notation is
 \(\exp(n\macheps)-1\), which satisfies
\[
(1+\macheps)^n-1
<\exp(n\macheps)-1
<(1-\macheps)^{-n}-1
<\frac{n\macheps}{1-n\macheps}.
\]
Therefore, similarly to \(\gammanw\), it cannot work for Model~\eqref{eq:model1}.
In Figure~\ref{fig:comparison-gamma}, we compare these notations.

We then state the following lemma to show that
\(\gamman=(1-\macheps)^{-n}-1\) is very close to Wilkinson's notation, i.e., 
\(\gammanw=(1+\macheps)^n-1\).

\begin{figure}[!tb]
\label{fig:comparison-gamma}
\caption{Comparison of different notations.}
\begin{tabular}{cc}
\includegraphics[width=0.45\textwidth]{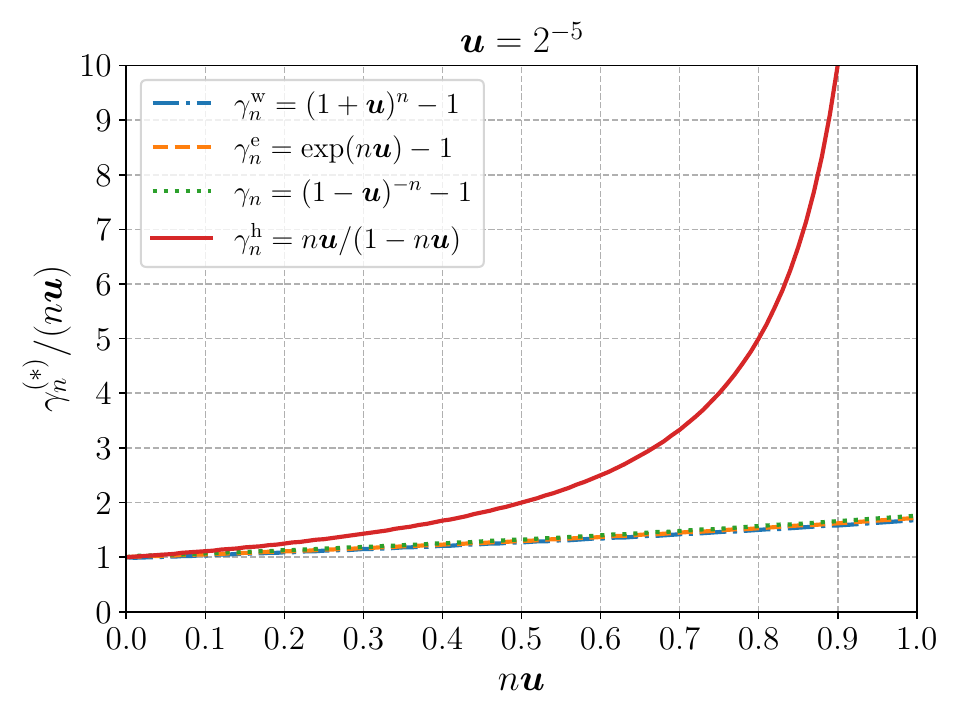} &
\includegraphics[width=0.45\textwidth]{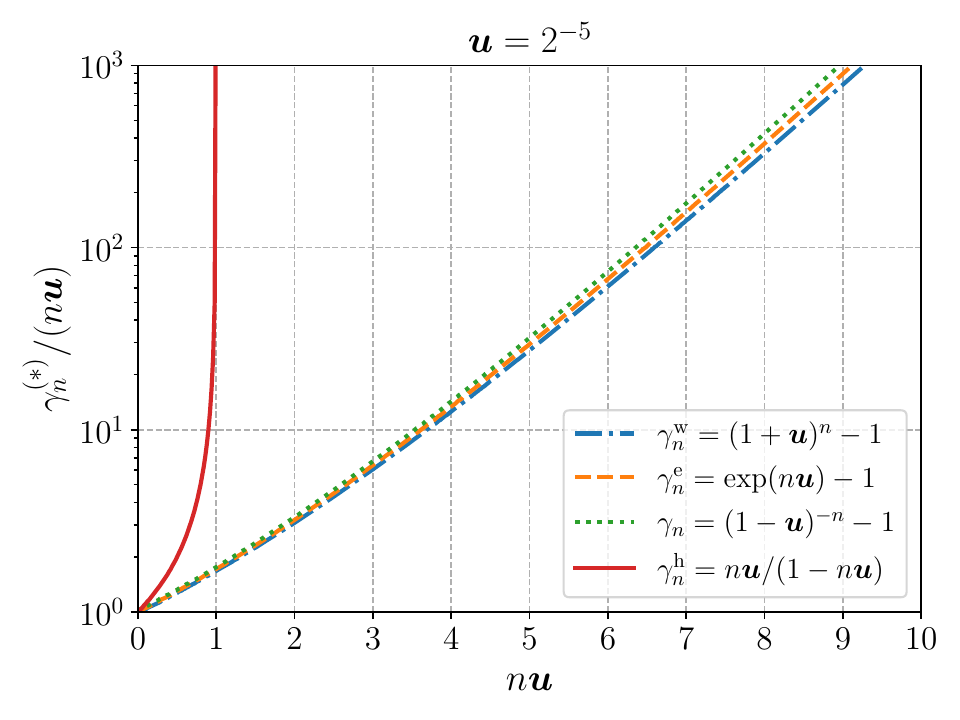} \\
(a) & (b)
\end{tabular}
\end{figure}

\begin{lemma}
Let \(\macheps\in(0,1)\) and \(n\) be a positive integer.
Then
\[
\frac{\gamman-\gammanw}{\gammanw}\leq \macheps(1-\macheps)^{-n}.
\]
\end{lemma}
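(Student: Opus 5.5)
The plan is to factor both the numerator and the denominator as geometric-type sums and compare them term by term. Set \(a=(1-\macheps)^{-1}\) and \(b=1+\macheps\). Since \(\macheps\in(0,1)\), we have \(a>1\) and \(b>1\). Also \(\gamman=a^n-1\) and \(\gammanw=b^n-1\). The claimed inequality is then equivalent to
\[
a^n-b^n\leq \macheps\, a^n\,(b^n-1),
\]
because \(\gammanw>0\) and \(\gamman-\gammanw=a^n-b^n\).

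First I will compute the two elementary differences:
\[
a-b=\frac{1-(1-\macheps^2)}{1-\macheps}=\macheps^2 a,\qquad b-1=\macheps .
\]
Next I will use the standard factorizations
\[
a^n-b^n=(a-b)\sum_{k=0}^{n-1}a^{k}b^{n-1-k}=\macheps^2\sum_{k=0}^{n-1}a^{k+1}b^{n-1-k},
\]
and
\[
b^n-1=(b-1)\sum_{j=0}^{n-1}b^{j}=\macheps\sum_{j=0}^{n-1}b^{j}.
\]
After dividing by \(\macheps^2>0\), the target inequality becomes
\[
\sum_{k=0}^{n-1}a^{k+1}b^{n-1-k}\leq a^n\sum_{j=0}^{n-1}b^{j}.
\]

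Finally, I will reindex the right-hand sum with \(j=n-1-k\), so that both sums run over the same powers \(b^{n-1-k}\) for \(k=0,\dots,n-1\). The comparison is then termwise. Since \(a>1\) and \(k+1\leq n\), we have \(a^{k+1}\leq a^n\), and hence \(a^{k+1}b^{n-1-k}\leq a^n b^{n-1-k}\) for every \(k\). Summing over \(k\) gives the result.

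The only step that needs some care is spotting the identity \(a-b=\macheps^2a\). This identity is what produces the extra factor \(\macheps\) relative to the \(\macheps\) in \(b-1\); once it is in hand, the argument is a direct termwise comparison. No restriction such as \(n\macheps<1\) is needed anywhere, which is consistent with the statement.
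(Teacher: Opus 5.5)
Your proof is correct and follows essentially the paper's route: the same geometric-sum factorization of \(a^n-b^n\) with \(a-b=\macheps^2 a\). The only difference is the finish: the paper bounds the sum by \(n a^{n-1}\) using \(b\le a\), obtaining \(\gamman-\gammanw\le n\macheps^2(1-\macheps)^{-n}\), and then divides by \(\gammanw\ge n\macheps\); you instead also factor \(b^n-1=\macheps\sum_j b^j\) and compare term by term, which needs only \(a\ge 1\) and avoids Bernoulli's inequality.
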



\begin{proof}
Let \(\alpha=(1-\macheps)^{-1}\) and \(\beta=1+\macheps\).
Noticing  that
\[
\alpha^{-1}\beta = 1-\macheps^2\leq 1,
\]
we then have
\begin{equation*}
\gamman-\gammanw
=\alpha^n-\beta^n
=(\alpha-\beta)\sum_{i=0}^{n-1}\alpha^{n-1-i}\beta^i
\leq n(\alpha-\beta)\alpha^{n-1}
=n\macheps^2(1-\macheps)^{-n},
\end{equation*}
which concludes the proof by combining this bound with \(\gammanw\geq n\macheps\).
\end{proof}



We then demonstrate that the quantity represented by the proposed notation
\(\gamman\) is still of order \(n\macheps\), consistent with the classical
\(\gammanh\)-notation.

\begin{theorem}
If \(\macheps\leq1/6\) and \(n\macheps\leq1\), then
\[
(1-\macheps)^{-n}-1<2n\macheps.
\]
\end{theorem}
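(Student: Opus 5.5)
The plan is to bound \((1-\macheps)^{-n}\) through the exponential and then use convexity of \(\exp\) on \([0,1]\). Write
\[
(1-\macheps)^{-n}=\exp\bigl(-n\log(1-\macheps)\bigr),
\]
and bound \(-\log(1-\macheps)\) by a quantity of the form \(\macheps c\) with \(c\) slightly larger than \(1\). From the series \(-\log(1-\macheps)=\sum_{k\ge1}\macheps^k/k\) we get
\[
-\log(1-\macheps)\le \macheps+\frac{\macheps^2}{2}\cdot\frac{1}{1-\macheps}.
\]
Hence, for \(\macheps\le 1/6\),
\[
-\log(1-\macheps)\le \macheps\Bigl(1+\tfrac{1}{2}\cdot\tfrac{1}{6}\cdot\tfrac{6}{5}\Bigr)=\tfrac{11}{10}\,\macheps .
\]
It follows that \((1-\macheps)^{-n}\le \exp(x)\) with \(x=\tfrac{11}{10}n\macheps\le \tfrac{11}{10}\). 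Strictness can be kept because the bound on \(-\log(1-\macheps)\) is strict for \(\macheps>0\).

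Next I bound \(\exp(x)-1\) for \(0\le x\le 11/10\) by a linear function through the origin. By convexity, \((e^x-1)/x\) is increasing in \(x\), so
\[
e^x-1\le x\cdot\frac{e^{11/10}-1}{11/10}.
\]
Plugging in \(x=\tfrac{11}{10}n\macheps\) gives
\[
(1-\macheps)^{-n}-1< n\macheps\,\bigl(e^{1.1}-1\bigr).
\]
Since \(e^{1.1}\approx 3.004\), this is about \(2.004\,n\macheps\), which is slightly too large. This constant check is the main obstacle: the crude route overshoots \(2\) by a hair, so the estimate must be tightened.

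To tighten it, I would avoid the exponential detour and work directly with \(f(t)=(1-\macheps)^{-t}-1\), which is convex in \(t\). For \(0\le n\le N:=1/\macheps\) this gives
\[
f(n)\le \frac{n}{N}\,f(N)=n\macheps\bigl((1-\macheps)^{-1/\macheps}-1\bigr).
\]
It therefore suffices to show \((1-\macheps)^{-1/\macheps}<3\) for \(\macheps\le1/6\), which follows if the function \(g(\macheps)=(1-\macheps)^{-1/\macheps}\) is increasing. To see that, note \(\log g=\sum_{k\ge1}\macheps^{k-1}/k\), which is increasing in \(\macheps\). So \(g(\macheps)\le g(1/6)=(6/5)^6\approx 2.986<3\).

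The endpoint check \((6/5)^6=46656/15625<3\), i.e. \(46656<46875\), is a short explicit computation. Together with \(n\le 1/\macheps\) and \(n\ge1\), which makes the convexity inequality valid and the bound \(<2n\macheps\) strict, this completes the argument. The remaining delicate point is just that this numerical constant stays below \(3\).
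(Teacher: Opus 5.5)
Your final argument is correct and is essentially the paper's proof, with the two steps taken in the opposite order. The paper first uses concavity of \(\log(1-x)\) to get \((1-\macheps)^{-n}\le(6/5)^{6n\macheps}<3^{n\macheps}\), and then the chord bound for the convex function \(3^{x}\) on \([0,1]\); you instead take the chord of the convex map \(t\mapsto(1-\macheps)^{-t}\) on \([0,1/\macheps]\) first, and then bound \((1-\macheps)^{-1/\macheps}\le(6/5)^6<3\) via monotonicity of \(-\log(1-\macheps)/\macheps\), which is the same fact as the paper's concavity step (and you were right to discard the cruder exponential detour).
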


\begin{proof}
By the concavity of \(\log(1-x)\), for \(\macheps\leq 1/6\),
we have
\[
\log(1-\macheps)\geq 6\macheps\log\Bigl(\frac{5}{6}\Bigr),
\]
which implies that
\[
(1-\macheps)^{-n}\leq\Bigl(\frac{6}{5}\Bigr)^{6n\macheps}<3^{n\macheps}.
\]
At the same time, noticing that
\[
3^{n\macheps}\leq (1-n\macheps)\cdot 3^0+n\macheps\cdot 3^1
=1+2n\macheps
\]
by \(n\macheps\leq 1\), we can conclude the proof.

\end{proof}

Similarly to~\cite[Lemmas~3.1 and~3.3]{Higham2002}, the next lemma provides 
the necessary properties for \(\gamman\) defined in~\eqref{eq:bound}.

\begin{lemma} \label{lem:gamma_property}
If \(\abs{\delta_i}\leq\macheps\) and \(\rho_i=\pm 1\) for \(i=1\),
\(\dotsc\), \(n\), then
\begin{equation} \label{eq:lem-property:0}
\prod_{i=1}^n(1+\delta_i)^{\rho_i}=1+\theta_n, \qquad\abs{\theta_n}\leq \gamman.
\end{equation}
For any positive integer \(k\), \(i\), \(j\), and \(0<\macheps<1\), the following 
relations hold:
\begin{align}
\gamma_k\gamma_j&\leq\gamma_{\min(k,j)}\quad\text{for}\quad \macheps\leq 1-2^{-1/\max(k,j)}, \label{eq:lem-property:1}\\
i\gamma_k&\leq \gamma_{ik}, \label{eq:lem-property:2} \\
\gamma_k+\macheps&\leq\gamma_{k+1}, \label{eq:lem-property:3} \\
\gamma_k+\gamma_j+\gamma_k\gamma_j&=\gamma_{k+j}. \label{eq:lem-property:4}
\end{align}
\end{lemma}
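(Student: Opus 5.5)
The plan is to work throughout with the single quantity \(\alpha=(1-\macheps)^{-1}>1\), so that \(\gamma_k=\alpha^k-1\) for every positive integer \(k\). Each property then becomes an elementary statement about powers of \(\alpha\).

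For~\eqref{eq:lem-property:0}, I would first bound each factor separately. If \(\rho_i=1\), then \(1+\delta_i\in[1-\macheps,1+\macheps]\subset[1-\macheps,\alpha]\), using \(1+\macheps\le(1-\macheps)^{-1}\). If \(\rho_i=-1\), then \((1+\delta_i)^{-1}\in[(1+\macheps)^{-1},\alpha]\), and \((1+\macheps)^{-1}\ge 1-\macheps\). Hence every factor lies in \([\alpha^{-1},\alpha]\), and the product \(1+\theta_n\) lies in \([\alpha^{-n},\alpha^n]\). The upper side gives \(\theta_n\le\alpha^n-1=\gamman\) directly. For the lower side I need \(1-\alpha^{-n}\le\alpha^n-1\). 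With \(x=\alpha^n>0\), this is \(x+x^{-1}\ge2\), which is AM--GM. This endpoint comparison is the only step that needs a small idea: it is exactly where \(\gamman\) improves on \(\gammanw\) by covering the division model. Everything else is bookkeeping.

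For~\eqref{eq:lem-property:4}, note that \((1+\gamma_k)(1+\gamma_j)=\alpha^{k+j}=1+\gamma_{k+j}\); expanding the left side gives the identity. For~\eqref{eq:lem-property:2}, Bernoulli's inequality (or the binomial expansion with nonnegative terms) gives \(1+\gamma_{ik}=(1+\gamma_k)^i\ge1+i\gamma_k\). For~\eqref{eq:lem-property:3}, I compute
\[
\gamma_{k+1}-\gamma_k=\alpha^k(\alpha-1)\ge\alpha-1=\frac{\macheps}{1-\macheps}\ge\macheps .
\]

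For~\eqref{eq:lem-property:1}, assume without loss of generality that \(k\le j\), so \(\min(k,j)=k\) and \(\max(k,j)=j\). Since \(\gamma_k\ge0\), the claim \(\gamma_k\gamma_j\le\gamma_k\) reduces to \(\gamma_j\le1\), that is, \(\alpha^j\le2\). This is equivalent to \(1-\macheps\ge2^{-1/j}\), which is precisely the hypothesis \(\macheps\le1-2^{-1/\max(k,j)}\).
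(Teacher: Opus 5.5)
Your proof is correct and follows essentially the paper's route. You bound each factor by \((1-\macheps)^{-1}\) for \eqref{eq:lem-property:0} and verify \eqref{eq:lem-property:4} directly from the definition. You derive \eqref{eq:lem-property:2}--\eqref{eq:lem-property:3} from the same power identity; your Bernoulli and difference arguments are the paper's iteration of \eqref{eq:lem-property:4} in another guise. You reduce \eqref{eq:lem-property:1} to \(\gamma_{\max(k,j)}\le 1\).

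Your explicit lower-side check \(1-(1-\macheps)^n\le(1-\macheps)^{-n}-1\) fills in a step the paper leaves implicit. However, your remark misattributes the source of the improvement over \(\gammanw\). That gain comes from the upper bound \((1+\delta_i)^{-1}\le(1-\macheps)^{-1}\), not from the lower-side comparison, since the analogous inequality \(1-(1-\macheps)^n\le(1+\macheps)^n-1\) also holds.
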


\begin{proof}
The first conclusion~\eqref{eq:lem-property:0} is obtained by noting that
\(1+\delta_i\leq 1/(1-\macheps)\) and \((1+\delta_i)^{-1}\leq 1/(1-\macheps)\).
From the assumption \(0<\macheps<1\) and \(\macheps\leq 1-2^{-1/\max(k,j)}\),
we have \(0<\gamma_k\leq 1\) and
\(0<\gamma_j\leq 1\), which directly proves~\eqref{eq:lem-property:1}.
From the definition in~\eqref{eq:bound}, \eqref{eq:lem-property:4} can be verified straightforwardly.
We then have~\eqref{eq:lem-property:2} by repeatedly accumulating \(\gamma_k\) in~\eqref{eq:lem-property:4}.
Using \(\macheps\leq\gamma_1\) in~\eqref{eq:lem-property:4} with \(j=1\), we obtain~\eqref{eq:lem-property:3}.
\end{proof}

\begin{remark}
In comparison with~\cite[Lemma~3.3]{Higham2002}, we emphasize 
that~\eqref{eq:lem-property:4} holds as an equality for \(\gamman\), rather 
than as an inequality for \(\gammanh\).
\end{remark}

We next illustrate the effectiveness of the proposed notation through the 
following two examples.

\begin{example} \label{example:div}
Forward error in \(1/(\alpha+\beta)\).
\end{example}

This example illustrates that our proposed framework \(\gamman\) can treat 
the division cases that \(\gammanw\) and \(\exp(n\macheps)-1\) are unable to 
address, as discussed in Section~\ref{sec:introduction}.

Under Model~\eqref{eq:model0}, the computed result of \(1/(\alpha+\beta)\) satisfies
\begin{equation}
    \fl\Bigl(\frac{1}{\alpha+\beta}\Bigr)
    = \frac{1}{\bigl(\alpha+\beta\bigr)(1+\delta_1)}(1+\delta_2),
    \qquad \lvert\delta_1\rvert\leq\macheps, 
    \qquad \lvert\delta_2\rvert\leq\macheps,
\end{equation}
where \(\delta_1\) and \(\delta_2\) come from addition and division,
respectively.
Thus, the forward error in \(1/(\alpha+\beta)\) can be bounded as
follows:
\begin{equation*}
    \begin{split}
        \absbig{\fl\Bigl(\frac{1}{\alpha+\beta}\Bigr)-\frac{1}{\alpha+\beta}}
        &= \absbig{\frac{1}{\alpha+\beta}}
        \absbig{\frac{1+\delta_2}{1+\delta_1}-1}
        \leq \bigl((1-\macheps)^{-2}-1\bigr)\cdot
        \absbig{\frac{1}{\alpha+\beta}}.
    \end{split}
\end{equation*}
Even when we adopt Model~\eqref{eq:model0}, we still have to deal with the term \(1/(1+\delta_1)\).
As noted at the beginning of this section for Model~\eqref{eq:model1}, the 
bound \((1-\macheps)^{-2}-1\) is tight for this expression, and therefore any 
bound that is smaller than \((1-\macheps)^{-2}-1\), such as \((1+\macheps)^2-1\)
or \(\exp(2\macheps)-1\), fails to cover this situation.

\begin{example}
Error analysis for harmonic series \(\sum_{n=1}^{\infty}1/n\).
\end{example}

Let \(H_k:=\sum_{n=1}^{k}1/n\).
We compute \(H_k\) by forward summation, i.e., the terms are accumulated in the order
\[
1,\ \frac12,\ \frac13,\ \ldots,\ \frac1k.
\]
From~\eqref{eq:model0}, the computed result \(\hat H_{k}\) satisfies
\begin{equation}
\begin{split}
    \hat H_{k}
    &=H_k+\sum_{n=1}^k \frac{1}{n}\underbrace{\Bigl((1+\delta_{n}^{(\mathtt{div})})
    \prod_{i=\max(n,2)}^k(1+\delta_{i}^{(\mathtt{add})})-1\Bigr)}_{=:\delta_n},
\end{split}
\end{equation}
where \(\delta_n\) satisfies \(\abs{\delta_n}\leq (1+\macheps)^k-1\).
This means that
\[
\hat H_{k} = H_{k}(1+\Delta H_k)
\]
with \(\abs{\Delta H_k}\leq (1+\macheps)^k-1 \leq (1-\macheps)^{-k}-1\leq\gamma^h_k\), where only the last inequality requires \(k\macheps<1\).
Although the Wilkinson-type bound \((1+\macheps)^k-1\) is the sharpest of these three
bounds for this particular example, the proposed \(\gamma_k\) is still no larger
than the conventional \(\gamma_k^h\) whenever \(k\macheps<1\).
More importantly, unlike \(\gamma_k^h\), the proposed bound remains well defined and
valid when \(k\macheps\ge1\).

The forward summation of the harmonic series also provides a simple illustration
of stagnation in finite-precision arithmetic.
As the partial sum grows while the terms \(1/n\) decrease, eventually \(1/n\)
becomes too small to change the current floating-point partial sum.
Under IEEE round-to-nearest arithmetic, this occurs at \(n=2^{21}\) in single precision
and \(n=2^{48}+1\) in double precision~\cite[Table~6.3]{HP2019}.
Thus, in these formats the forward summation stagnates even before \(n\macheps\) reaches one.
Nevertheless, if the computation is formally continued beyond this point, the proposed
\(\gamma_n\) bound remains applicable even for \(n\macheps\ge1\), whereas the
conventional \(\gammanh\) bound is no longer available.

\section{Trackable and Modular Rounding Error Analysis}
\label{sec:trackable}

In this section, we illustrate the proposed trackable and modular rounding error 
analysis framework through a representative example, i.e., LU factorization. 
Unlike classical analysis that focuses primarily on deriving compact global 
error bounds, the proposed approach retains information about the origin and 
propagation of individual rounding errors throughout the computation.
As a result, it provides a more detailed characterization of the error 
structure and makes it possible to assess the contribution of different 
computational components to the final error.
This information is particularly useful for comparing algorithmic variants 
and for identifying operations that are most sensitive to precision 
reduction.
The following analysis demonstrates how these features can be exploited in practice.

Before proceeding, we first present the assumptions regarding basic
operations on vectors and matrices in Table~\ref{tab:notation}.
In the notation \(\Delta(X)^{(\mathtt{op})}(n)\), \(X\) indicates the quantity 
with which the error is associated, the superscript \(\mathtt{op}\) identifies 
the operation that produces the error, and the argument \(n\) indicates its 
dependence on the problem dimension. This notation allows us to track both the 
source and the propagation of individual rounding errors throughout the computation.
The assumptions in Table~\ref{tab:notation} are also modular: by assigning
different bounds to local perturbations, the same structural analysis can
accommodate, for example, different rounding models such as~\eqref{eq:model0} 
and~\eqref{eq:model1}, mixed precision arithmetic, compensated kernels, and 
Ozaki-style matrix multiplication.

\begin{table}[!tb]
\centering
\caption{Assumptions for basic operations on vectors and matrices.
Here \(\alpha\), \(\beta\in\mathbb{F}\) are scalars;
\(x\), \(y\in\mathbb{F}^{n}\) are vectors; \(A\in\mathbb F^{m\times n}\),
\(B\in\mathbb F^{n\times k}\), \(C\in\mathbb F^{m\times k}\), and
\(Y\in\mathbb F^{n\times m}\) are matrices;
\(U\in\mathbb F^{n\times n}\) is a triangular matrix.
}
\label{tab:notation}
\begin{spacing}{1.3}
\medskip
\begin{tabular}{ccc}
\hline Operation & \begin{tabular}[c]{@{}c@{}}BLAS/LAPACK\\ Subroutine\end{tabular} & Assumption\\ \hline
addition & - & \(\fl(\alpha\pm\beta)=(\alpha\pm\beta)(1+\delta^{(\mathtt{add})})\)\\
\hline
multiplication & - & \(\fl(\alpha\beta)=\alpha\beta(1+\delta^{(\mathtt{mult})})\)\\
\hline
division & - & \(\fl\bigl(\alpha\beta^{-1}\bigr)=\alpha\beta^{-1}\,(1+\delta^{(\mathtt{div})})\)\\
\hline
dot product  & \texttt{XDOT}/\texttt{XDOTC} & \(\fl(x\herm y) = (x+\Delta x^{(\mathtt{dot})}(n))\herm y\)\\
\hline
\begin{tabular}[c]{@{}c@{}}matrix--matrix\\ multiplication\end{tabular}  & \texttt{XGEMM} & \(AB+\alpha C = \hat C + \Delta(AB)^{(\mathtt{gemm})}(n) + (\Delta C)^{(\mathtt{gemm})}\)\\
\hline
triangular solver & \texttt{XTRSM}/\texttt{XTRSV} & \(U\hat X=Y+\Delta(U\hat X)^{(\mathtt{trsv})}(n)\)\\
\hline
\begin{tabular}[c]{@{}c@{}}non-block\\ LU factorization\end{tabular} & \texttt{XGETF2} & \(\hat A + \Delta(LU)^{(\mathtt{getf2})}(n) = \hat L\hat U\)\\ \hline
\end{tabular}
\end{spacing}
\end{table}

In Section~\ref{example:lu}, we present the trackable rounding error analysis 
for the block LU factorization through an illustrative example, which is 
connected to the triangular system solver analyzed in Section~\ref{example:triangular-solver}.
From the following analysis, the main source of error in the LU factorization
becomes evident.
In addition, we use these examples to highlight that the main sources of 
errors can vary across different algorithms.

\subsection{Trackable error analysis for triangular systems}\label{example:triangular-solver}
There are essentially two approaches to implementing a solver for upper triangular systems:
one is based on vector dot products, i.e., \texttt{XDOT}/\texttt{XDOTC}, 
and the other is based on vector additions, i.e., \texttt{XAXPY}.
These two approaches for upper triangular systems are described in Algorithms~\ref{alg:trsv-dot} 
and~\ref{alg:trsv-axpy}, respectively.
In the following lemmas, we analyze these algorithms based on the assumptions 
shown in Table~\ref{tab:notation}.
The corresponding algorithms and analysis for lower triangular systems are 
analogous.

\begin{algorithm}[!tb]
\caption{The \texttt{XDOT}-based upper triangular system solver}
\label{alg:trsv-dot}
\begin{algorithmic}[1]
\REQUIRE An upper triangular matrix \(U\in\mathbb F^{n\times n}\),
a right-hand side \(b\in\mathbb F^{n}\).
\ENSURE The solution \(x\in\mathbb F^{n}\) satisfying \(Ux = b\).

\STATE \(x_n = U_{nn}^{-1}b_n\)
\STATE \(s = b\)
\FOR{\(i = n-1:-1:1\)}
    \STATE \(s_i = b_i - U_{i,i+1:n} x_{i+1:n}\) \label{line:trsv-dot:dot}
    \STATE \(x_i = U_{i,i}^{-1}s_{i}\)
\ENDFOR
\end{algorithmic}
\end{algorithm}

\begin{algorithm}[!tb]
\caption{The \texttt{XAXPY}-based upper triangular system solver}
\label{alg:trsv-axpy}
\begin{algorithmic}[1]
\REQUIRE An upper triangular matrix \(U\in\mathbb F^{n\times n}\),
a right-hand side \(b\in\mathbb F^{n}\).
\ENSURE The solution \(x\in\mathbb F^{n}\) satisfying \(Ux = b\).

\STATE \(x_n = U_{nn}^{-1}b_n\)
\STATE \(s = b\)
\FOR{\(i = n-1:-1:1\)}
    \STATE \(s_{1:i} = s_{1:i} - U_{1:i,i+1} x_{i+1}\)
    \label{line:trsv-axpy:axpy}
    \STATE \(x_i = U_{i,i}^{-1}s_{i}\)
\ENDFOR
\end{algorithmic}
\end{algorithm}

\begin{lemma}[The \texttt{XDOT}-based algorithm] \label{lem:triangular-solver-xdot}
    Assume that \(U\in\mathbb F^{n\times n}\) is a nonsingular upper triangular
    matrix and \(b\in\mathbb F^n\).
    Suppose that the triangular system \(Ux=b\) is solved by the \texttt{XDOT}-based algorithm (i.e., Algorithm~\ref{alg:trsv-dot}) with
    \begin{align}
        \hat x_i&=U_{i,i}^{-1}(1+\delta_i^{(\mathtt{div})})\hat s_i,
        \label{eq:trsv-dot:div}\\
        \fl(U_{i,i+1:n}\hat x_{i+1:n})&=\bigl(U_{i,i+1:n}+\Delta U^{(\mathtt{dot})}_{i,i+1:n}(n-i)\bigr)\hat x_{i+1:n},\qquad i<n, \label{eq:trsv:dot}\\
        \hat s_i&=(1+\delta_i^{(\mathtt{add})})
        \left(b_i-\fl(U_{i,i+1:n}\hat x_{i+1:n})\right),\qquad i<n.
        \label{eq:trsv:sub}
    \end{align}
    where \(\hat s_n=b_n\) and \(\delta_n^{(\mathtt{add})}:=0\).
    If \(\abs{\delta_i^{(\mathtt{add})}}<1\) and
    \(\abs{\delta_i^{(\mathtt{div})}}<1\), then there exists an upper triangular matrix \(\Delta U^{(\mathtt{trsv})}(n)\) such that the computed solution \(\hat x\) satisfies
    \begin{equation} \label{eq:lem-trsv-dot}
        \bigl(U + \Delta U^{(\mathtt{trsv})}(n)\bigr) \hat x = b,
    \end{equation}
    where
    \begin{equation} \label{eq:lem-trsv-dot-trackable}
        \Delta U^{(\mathtt{trsv})}_{i,j}(n)=
        \begin{cases}
        \Delta U^{(\mathtt{dot})}_{i,j}(n-i), & i<j,\\[1mm]
        \displaystyle\left((1+\delta_i^{(\mathtt{add})})^{-1}
        (1+\delta_i^{(\mathtt{div})})^{-1}-1\right)U_{i,i}, & i=j.
        \end{cases}
    \end{equation}
\end{lemma}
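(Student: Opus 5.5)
We verify the componentwise identity row by row; it is purely algebraic, with no bound on the perturbations. The plan is to show that for each \(i\) the \(i\)th equation of~\eqref{eq:lem-trsv-dot}, namely
\[
\sum_{j\ge i}\bigl(U_{i,j}+\Delta U^{(\mathtt{trsv})}_{i,j}(n)\bigr)\hat x_j=b_i,
\]
follows from~\eqref{eq:trsv-dot:div}--\eqref{eq:trsv:sub}, with \(\Delta U^{(\mathtt{trsv})}(n)\) defined by~\eqref{eq:lem-trsv-dot-trackable}. This matrix is upper triangular by construction: it has zero entries for \(i>j\), the \texttt{XDOT} perturbations above the diagonal, and the scalar factors on the diagonal.

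First invert the scalar relations. Since \(\abs{\delta_i^{(\mathtt{add})}}<1\) and \(\abs{\delta_i^{(\mathtt{div})}}<1\), the factors \(1+\delta_i^{(\mathtt{add})}\) and \(1+\delta_i^{(\mathtt{div})}\) are nonzero. This is exactly why the hypothesis is stated this way rather than through \(\macheps\). Because \(U\) is nonsingular, \(U_{i,i}\neq 0\). Then~\eqref{eq:trsv-dot:div} gives
\[
\hat s_i=(1+\delta_i^{(\mathtt{div})})^{-1}U_{i,i}\hat x_i ,
\]
and for \(i<n\),~\eqref{eq:trsv:sub} gives
\[
b_i-\fl(U_{i,i+1:n}\hat x_{i+1:n})=(1+\delta_i^{(\mathtt{add})})^{-1}\hat s_i .
\]
Combining these with~\eqref{eq:trsv:dot} yields
\[
b_i=\bigl(U_{i,i+1:n}+\Delta U^{(\mathtt{dot})}_{i,i+1:n}(n-i)\bigr)\hat x_{i+1:n}
+(1+\delta_i^{(\mathtt{add})})^{-1}(1+\delta_i^{(\mathtt{div})})^{-1}U_{i,i}\hat x_i .
\]
Writing the diagonal coefficient as \(U_{i,i}+\bigl((1+\delta_i^{(\mathtt{add})})^{-1}(1+\delta_i^{(\mathtt{div})})^{-1}-1\bigr)U_{i,i}\) gives precisely the \(i\)th row of~\eqref{eq:lem-trsv-dot} with the entries~\eqref{eq:lem-trsv-dot-trackable}.

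For \(i=n\) there is no dot product and \(\hat s_n=b_n\). With the convention \(\delta_n^{(\mathtt{add})}=0\), the same formula reduces to \((1+\delta_n^{(\mathtt{div})})^{-1}U_{n,n}\hat x_n=b_n\), which is the last row. Hence the \(n=i\) case fits the uniform diagonal formula.

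The only mildly delicate point is bookkeeping rather than analysis. We must note that the \texttt{XDOT} perturbation in row \(i\) has length \(n-i\) and is placed in columns \(i+1{:}n\). We must also check that the recursion uses the computed \(\hat x_{j}\), \(j>i\), in every row, so the rows decouple and no error propagates across rows. That is what makes the backward perturbation purely row-local and trackable.
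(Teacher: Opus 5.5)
Your proposal is correct and matches the paper's proof: you invert the division and subtraction relations, substitute the dot-product model to get the row identity \(b_i-\bigl(U_{i,i+1:n}+\Delta U^{(\mathtt{dot})}_{i,i+1:n}(n-i)\bigr)\hat x_{i+1:n}=(1+\delta_i^{(\mathtt{add})})^{-1}(1+\delta_i^{(\mathtt{div})})^{-1}U_{i,i}\hat x_i\), and read off \(\Delta U^{(\mathtt{trsv})}(n)\), handling \(i=n\) through \(\delta_n^{(\mathtt{add})}=0\). Your factor ordering is also the one that remains valid for noncommutative (quaternion) arithmetic.
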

\begin{proof}
    See Appendix~\ref{subsec:triangular-xdot}.
\end{proof}

\begin{lemma}[The \texttt{XAXPY}-based algorithm] \label{lem:triangular-solver-xaxpy}
    Assume that \(U\in\mathbb F^{n\times n}\) is a nonsingular upper triangular matrix and \(b\in\mathbb F^n\).
    Suppose that \(Ux=b\) is solved by the \texttt{XAXPY}-based algorithm (i.e., Algorithm~\ref{alg:trsv-axpy}) with~\eqref{eq:trsv-dot:div} and
    \begin{align}
        \fl(\hat s_{k} - U_{k,i+1} \hat x_{i+1})
        &= (1+\delta_{k,i+1}^{(\mathtt{add})})
        \left(\hat s_k-(1+\delta_{k,i+1}^{(\mathtt{mult})})U_{k,i+1}\hat x_{i+1}\right)
        \label{eq:trsv:axpy}
    \end{align}
    for \(1\leq k\leq i\) and \(1 \leq i \leq n-1\).
    If \(\abs{\delta_{i,j}^{(\mathtt{add})}}<1\) and
    \(\abs{\delta_{i}^{(\mathtt{div})}}<1\), then there exists an upper triangular matrix \(\Delta U^{(\mathtt{trsv})}(n)\) such that the computed solution \(\hat x\) satisfies~\eqref{eq:lem-trsv-dot},
    where
    \begin{equation} \label{eq:lem-trsv-axpy-trackable}
        \Delta U^{(\mathtt{trsv})}_{i,j}(n)=
        \begin{cases}
        \displaystyle\bigl((1+\delta_{i,n}^{(\mathtt{add})})^{-1}
        \cdots(1+\delta_{i,j+1}^{(\mathtt{add})})^{-1}
        (1+\delta_{ij}^{(\mathtt{mult})})-1\bigr)U_{i,j}, & i<j,\\[3mm]
        \displaystyle\bigl((1+\delta_{i,n}^{(\mathtt{add})})^{-1}
        \cdots(1+\delta_{i,i+1}^{(\mathtt{add})})^{-1}
        (1+\delta_i^{(\mathtt{div})})^{-1}-1\bigr)U_{i,i}, & i=j.
        \end{cases}
    \end{equation}
\end{lemma}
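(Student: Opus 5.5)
The plan is to unroll the recurrence of Algorithm~\ref{alg:trsv-axpy} for each fixed row index \(i\) and follow the running quantity \(\hat s_i\) through every update it receives. Row \(i\) of \(s\) is initialised as \(s_i=b_i\). It is then modified once for each outer index \(k=n-1,\dots,i\) of the loop, that is, once per column \(j=k+1=n,n-1,\dots,i+1\). Each such modification subtracts \(U_{i,j}\hat x_j\). After the update for column \(j=i+1\) the value \(\hat s_i\) is final, and it is used in~\eqref{eq:trsv-dot:div} to form \(\hat x_i\). I denote by \(s_i^{(j)}\) the value of \(\hat s_i\) after the update by column \(j\), and set \(s_i^{(n+1)}:=b_i\).

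By~\eqref{eq:trsv:axpy}, with the column index \(j\) in place of \(i+1\), each update reads
\[
s_i^{(j)}=(1+\delta_{i,j}^{(\mathtt{add})})\bigl(s_i^{(j+1)}-(1+\delta_{ij}^{(\mathtt{mult})})U_{i,j}\hat x_j\bigr).
\]
The assumption \(\abs{\delta_{i,j}^{(\mathtt{add})}}<1\) guarantees \(1+\delta_{i,j}^{(\mathtt{add})}\neq0\), so this can be inverted:
\[
s_i^{(j+1)}=(1+\delta_{i,j}^{(\mathtt{add})})^{-1}s_i^{(j)}+(1+\delta_{ij}^{(\mathtt{mult})})U_{i,j}\hat x_j .
\]
I would then telescope this identity from \(j=n\) down to \(j=i+1\). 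The result is
\[
b_i=\prod_{l=i+1}^{n}(1+\delta_{i,l}^{(\mathtt{add})})^{-1}\,\hat s_i+\sum_{j=i+1}^{n}\prod_{l=j+1}^{n}(1+\delta_{i,l}^{(\mathtt{add})})^{-1}(1+\delta_{ij}^{(\mathtt{mult})})U_{i,j}\hat x_j .
\]
To verify this I would use a short induction on the number of updates. The point to check is that the term \(U_{i,j}\hat x_j\) is multiplied only by the inverse add-factors of the updates that came \emph{before} it, namely those for columns \(n,\dots,j+1\). This is exactly the product written in the first case of~\eqref{eq:lem-trsv-axpy-trackable}.

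Next I would replace \(\hat s_i\) using~\eqref{eq:trsv-dot:div}. Since \(\abs{\delta_i^{(\mathtt{div})}}<1\), that relation can be inverted as
\[
\hat s_i=(1+\delta_i^{(\mathtt{div})})^{-1}U_{i,i}\hat x_i .
\]
Substituting this, the diagonal coefficient of \(\hat x_i\) becomes the product of all \(n-i\) inverse add-factors times \((1+\delta_i^{(\mathtt{div})})^{-1}U_{i,i}\). For each \(j\ge i\), I then define \(\Delta U^{(\mathtt{trsv})}_{i,j}(n)\) as the coefficient of \(\hat x_j\) minus \(U_{i,j}\), and set the entries below the diagonal to zero. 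With these definitions the \(i\)th equation is precisely the \(i\)th row of~\eqref{eq:lem-trsv-dot}, and the entries coincide with~\eqref{eq:lem-trsv-axpy-trackable}.

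Two edge cases need a separate check. For row \(n\), the value \(\hat s_n=b_n\) is never updated, so the add-product is empty and the formula reduces to \((1+\delta_n^{(\mathtt{div})})^{-1}U_{nn}\hat x_n=b_n\). For row \(i=n-1\), there is only a single update. The main obstacle is not conceptual; it is getting the indexing right. The subscript convention \(\delta_{k,i+1}\) in~\eqref{eq:trsv:axpy} ties each factor to the column being eliminated. I must confirm that the ordering of the loop (decreasing \(i\)) produces products running from column \(n\) down to \(j+1\) for off-diagonal entries, and down to \(i+1\) for the diagonal entry, rather than the reverse.
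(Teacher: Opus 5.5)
Your proposal is correct and follows essentially the same route as the paper's proof. Both unroll the \texttt{XAXPY} recurrence for row \(i\), substitute \(\hat s_i=(1+\delta_i^{(\mathtt{div})})^{-1}U_{i,i}\hat x_i\), and read off the coefficient of each \(\hat x_j\); the only difference is cosmetic, since the paper expands \(\hat s_i\) forward in terms of \(b_i\) and then multiplies through by the inverse of the full add-factor product, whereas you invert each update before telescoping, and both give the identical identity.
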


\begin{proof}
    See Appendix~\ref{subsec:triangular-xaxpy}.
\end{proof}

For a single right-hand side, these lemmas both give the residual form
\begin{equation}
U\hat x=b+\Delta(U\hat x)^{(\mathtt{trsv})}(n),\qquad
\abs{\Delta(U\hat x)^{(\mathtt{trsv})}(n)}\leq\epstrsv(n)\abs U\abs{\hat x},
\label{eq:trsv-residual-vector}
\end{equation}
whenever \(\abs{\Delta U^{(\mathtt{trsv})}(n)}\leq\epstrsv(n)\abs U\), since
\(\Delta(U\hat x)^{(\mathtt{trsv})}(n)=-\Delta U^{(\mathtt{trsv})}(n)\hat x\).
Applying this result independently to the columns of a multiple right-hand side
solve gives the \texttt{XTRSM} model in Table~\ref{tab:notation}:
\begin{equation}
U\hat X=Y+\Delta(U\hat X)^{(\mathtt{trsv})}(n),\qquad
\abs{\Delta(U\hat X)^{(\mathtt{trsv})}(n)}\leq\epstrsv(n)\abs U\abs{\hat X}.
\label{eq:trsm-residual-matrix}
\end{equation}
The columns may have different coefficient perturbations and this
model requires no common coefficient perturbation.

Lemmas~\ref{lem:triangular-solver-xdot} and~\ref{lem:triangular-solver-xaxpy}
show that the two variants have different error structures.
For the \texttt{XDOT}-based algorithm, the off-diagonal perturbations are 
inherited directly from the dot products, whereas for the
\texttt{XAXPY}-based algorithm they involve accumulated multiplication and
addition errors.
Hence, a more accurate dot product can directly improve the former, but not the latter; for example, see Section~\ref{remark:Kahan} for further discussion.


\subsection{Trackable error analysis for block LU factorization}\label{example:lu}
In this example, we consider two different implementations of the block LU 
factorization, namely Algorithms~\ref{alg:block-LU} and~\ref{alg:block-LU-2}.
The differences between these two algorithms occurs in 
Lines~\ref{line:lu:tr-solver} and~\ref{line:update_trail}.
For simplicity, in both Algorithms~\ref{alg:block-LU} and~\ref{alg:block-LU-2},
we assume that \(m = qn_b\) and \(n = pn_b\), and the displayed algorithms 
omit pivoting.
In the following theorems, we analyze these algorithms based on the assumptions 
shown in Table~\ref{tab:notation}.

\begin{algorithm}[!tb]
\caption{The right-looking block LU factorization}
\label{alg:block-LU}
\begin{algorithmic}[1]
\REQUIRE A block size \(n_b\), and a given matrix \(\Acal\in\mathbb F^{m\times n}\) 
with \(m\geq n\),  \(m = qn_b\), and \(n = pn_b\).
\ENSURE A lower trapezoidal matrix \(\LL\in\mathbb F^{m\times n}\)
and an upper triangular matrix \(\UU\in\mathbb F^{n\times n}\)
satisfying \(\Acal = \LL\UU\).

\FOR{\(i = 1:p-1\)}
    \STATE Compute the LU factorization \(\Acal_{i:q, i} = \LL_{i:q, i}
    \UU_{i,i}\). \label{line:lu:sub-lu}
    \STATE Update \(\UU_{i, i+1:p}\) by solving the lower triangular 
    system \(\LL_{i,i} \UU_{i, i+1:p} = \Acal_{i, i+1:p}\). \label{line:lu:tr-solver}
    \STATE Update trailing submatrix by \(\Acal_{i+1:q, i+1:p} = 
    \Acal_{i+1:q, i+1:p} - \LL_{i+1:q, i} \UU_{i, i+1:p}\). \label{line:update_trail}
\ENDFOR
\STATE Compute the LU factorization \(\Acal_{p:q, p} = \LL_{p:q, p}
\UU_{p,p}\).
\end{algorithmic}
\end{algorithm}

\begin{algorithm}[!tb]
\caption{The left-looking block LU factorization}
\label{alg:block-LU-2}
\begin{algorithmic}[1]
\REQUIRE A block size \(n_b\), and a given matrix \(\Acal\in\mathbb F^{m\times n}\) 
with \(m\geq n\),  \(m = qn_b\), and \(n = pn_b\).
\ENSURE A lower trapezoidal matrix \(\LL\in\mathbb F^{m\times n}\)
and an upper triangular matrix \(\UU\in\mathbb F^{n\times n}\)
satisfying \(\Acal = \LL\UU\).

\FOR{\(i = 1:p-1\)}
    \STATE Compute the LU factorization \(\Acal_{i:q, i} = \LL_{i:q, i}
    \UU_{i,i}\).
    \STATE Update \(\UU_{i, i+1:p}\) by solving the lower triangular 
    system \(\LL_{i,i} \UU_{i, i+1:p} = \Acal_{i, i+1:p} - \LL_{i,1:i-1} \UU_{1:i-1, i+1:p}\).
    \STATE Update trailing submatrix by \(\Acal_{i+1:q, i+1} = 
    \Acal_{i+1:q, i+1} - \LL_{i+1:q, 1:i} \UU_{1:i, i+1}\).
\ENDFOR
\STATE Compute the LU factorization \(\Acal_{p:q, p} = \LL_{p:q, p}
\UU_{p,p}\).
\end{algorithmic}
\end{algorithm}

\begin{theorem}[The right-looking block LU factorization] \label{thm:block-lu-right}
    Let \(\LL\in\mathbb{F}^{m\times n}\) and \(\UU\in\mathbb{F}^{n\times n}\)
    denote a lower trapezoidal matrix and an upper triangular matrix, respectively.
    Assume that \(\Acal = \LL\UU\) is computed by
    Algorithm~\ref{alg:block-LU}.
    Also, assume that Lines~\ref{line:lu:sub-lu}--\ref{line:update_trail} in
    Algorithm~\ref{alg:block-LU} satisfy, respectively,
    \begin{align}
        \Acalhat^{(i-1)}_{i:q,i}
        + \Delta\bigl(\LL_{i:q,i}\UU_{i,i}\bigr)^{(\mathtt{getf2})}(n_b)
        &= \LLhat_{i:q,i}\UUhat_{i,i},
        \label{eq:lem4:blocklu:trf2} \\
        \LLhat_{i,i}\UUhat_{i,i+1:p}
        &= \Acalhat^{(i-1)}_{i,i+1:p}+\Delta(\LL_{i,i}\UU_{i,i+1:p})^{(\mathtt{trsv})}(n_b),
        \label{eq:lem4:blocklu:trsm} \\
        \Acalhat^{(i-1)}_{i+1:q,i+1:p}
        - \LLhat_{i+1:q,i}\UUhat_{i,i+1:p}
        &= \Acalhat^{(i)}_{i+1:q,i+1:p}
        + \bigl(\Delta\bigl(\LL_{i+1:q,i}\UU_{i,i+1:p}\bigr)^{(\mathtt{gemm})}(n_b)\bigr)^{(i)}
        \notag\\
        &\quad
        + \bigl(\Delta\Acal^{(i-1)}_{i+1:q,i+1:p}\bigr)^{(\mathtt{gemm})},
        \label{eq:lem4:blocklu:update-trail}
    \end{align}
    where \(\Acalhat^{(0)}_{i+1:q,i+1:p}:=\Acal_{i+1:q,i+1:p}\).
    Then there exists \(\Delta(\LL\UU)^{(\mathtt{getrf})}(n,n_b)\) such that
    \begin{equation} \label{eq:blocklu:getrf}
        \Acal + \Delta(\LL\UU)^{(\mathtt{getrf})}(n,n_b) = \LLhat\UUhat,
    \end{equation}
    where, for the \(i\)th block column with \(i=1,\ldots,p\),
    \begin{equation} \label{eq:blocklu:getrf-trackable}
    \begin{split}
        \Delta(\LL\UU)^{(\mathtt{getrf})}_{i:q,i}(n,n_b)
        ={}&-\sum_{k=1}^{i-1}
        \bigl(\Delta(\LL_{i:q,k}\UU_{k,i})^{(\mathtt{gemm})}(n_b)\bigr)^{(k)}
        \\
        &-\sum_{k=0}^{i-2}
        \bigl(\Delta\Acal^{(k)}_{i:q,i}\bigr)^{(\mathtt{gemm})}
        + \Delta(\LL_{i:q,i}\UU_{i,i})^{(\mathtt{getf2})}(n_b),
    \end{split}
    \end{equation}
    and, for the off-diagonal blocks in the \(i\)th block row with \(i=1,\ldots,p-1\),
    \begin{equation} \label{eq:blocklu:getrf-trsv}
    \begin{split}
        \Delta(\LL\UU)^{(\mathtt{getrf})}_{i,i+1:p}(n,n_b)
        ={}&-\sum_{k=1}^{i-1}\bigl(\Delta(\LL_{i,k}\UU_{k,i+1:p})^{(\mathtt{gemm})}(n_b)\bigr)^{(k)}
        -\sum_{k=0}^{i-2}\bigl(\Delta\Acal^{(k)}_{i,i+1:p}\bigr)^{(\mathtt{gemm})} \\
        &+\Delta(\LL_{i,i}\UU_{i,i+1:p})^{(\mathtt{trsv})}(n_b).
    \end{split}
    \end{equation}

    For the case \(n_b=1\), Algorithm~\ref{alg:block-LU} reduces to a non-block
    LU factorization.
    The \texttt{XGETF2} and \texttt{XTRSV} operations in~\eqref{eq:lem4:blocklu:trf2}
    and~\eqref{eq:lem4:blocklu:trsm}, respectively, reduce to divisions, 
    i.e.,~\eqref{eq:lem4:blocklu:trf2} reduces to
    \begin{equation} \label{eq:blocklu:div}
        \LLhat_{j,i} = (1+\delta^{(\mathtt{div})}_{j,i})\Acalhat^{(i-1)}_{j,i}\UUhat_{i,i}^{-1},
    \end{equation}
    and \(\Delta(\LL_{i,i}\UU_{i,i+1:p})^{(\mathtt{trsv})}(1)=0\) due to \(\LLhat_{i,i}=1\).
    Then there exists \(\Delta(\LL\UU)^{(\mathtt{getf2})}(n)\) such that
    \begin{equation} \label{eq:blocklu:getf2}
        \Acal + \Delta(\LL\UU)^{(\mathtt{getf2})}(n) = \LLhat\UUhat,
    \end{equation}
    where the entries on and below the diagonal, i.e., \(i\geq j\), satisfy
    \begin{equation} \label{eq:blocklu:getf2-lower}
    \begin{split}
        \Delta(\LL\UU)^{(\mathtt{getf2})}_{i,j}(n)
        ={}&-\sum_{k=1}^{j-1}
        \bigl(\Delta(\LL_{i,k}\UU_{k,j})^{(\mathtt{gemm})}(1)\bigr)^{(k)} \\
        &-\sum_{k=0}^{j-2}
        \bigl(\Delta\Acal^{(k)}_{i,j}\bigr)^{(\mathtt{gemm})}
        + \bigl(1-(1+\delta^{(\mathtt{div})}_{i,j})^{-1}\bigr)\LLhat_{i,j}\UUhat_{j,j},
    \end{split}
    \end{equation}
    while the entries above the diagonal, i.e., \(i< j\), satisfy \(\Delta(\LL_{ii}\UU_{ij})^{(\mathtt{trsv})}(1)=0\) and furthermore
    \begin{equation} \label{eq:blocklu:getf2-upper}
        \Delta(\LL\UU)^{(\mathtt{getf2})}_{i,j}(n)
        =-\sum_{k=1}^{i-1}\bigl(\Delta(\LL_{i,k}\UU_{k,j})^{(\mathtt{gemm})}(1)\bigr)^{(k)}
        -\sum_{k=0}^{i-2}\bigl(\Delta\Acal^{(k)}_{i,j}\bigr)^{(\mathtt{gemm})}.
    \end{equation}
\end{theorem}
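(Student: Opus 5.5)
The plan is to prove \eqref{eq:blocklu:getrf} block by block, by telescoping the trailing-matrix updates. Fix a block column \(i\) and a block row index \(r\geq i\), so that \((r,i)\) is a block on or below the block diagonal. This block of \(\Acal\) is touched by the updates \eqref{eq:lem4:blocklu:update-trail} at steps \(k=1,\dots,i-1\). At step \(k\), the \((r,i)\) block of \eqref{eq:lem4:blocklu:update-trail} reads
\[
\Acalhat^{(k-1)}_{r,i}-\LLhat_{r,k}\UUhat_{k,i}=\Acalhat^{(k)}_{r,i}+\bigl(\Delta(\LL_{r,k}\UU_{k,i})^{(\mathtt{gemm})}(n_b)\bigr)^{(k)}+\bigl(\Delta\Acal^{(k-1)}_{r,i}\bigr)^{(\mathtt{gemm})}.
\]
Summing over \(k=1,\dots,i-1\), the intermediate quantities \(\Acalhat^{(k)}_{r,i}\) telescope. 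Using \(\Acalhat^{(0)}=\Acal\), this gives
\[
\Acalhat^{(i-1)}_{r,i}=\Acal_{r,i}-\sum_{k=1}^{i-1}\LLhat_{r,k}\UUhat_{k,i}-\sum_{k=1}^{i-1}\bigl(\Delta(\LL_{r,k}\UU_{k,i})^{(\mathtt{gemm})}\bigr)^{(k)}-\sum_{k=0}^{i-2}\bigl(\Delta\Acal^{(k)}_{r,i}\bigr)^{(\mathtt{gemm})}.
\]

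Next substitute this into the panel factorization \eqref{eq:lem4:blocklu:trf2}, whose right-hand side is \(\LLhat_{r,i}\UUhat_{i,i}\). Moving \(\sum_{k<i}\LLhat_{r,k}\UUhat_{k,i}\) to the right gives \((\LLhat\UUhat)_{r,i}\), since \(\UUhat\) is block upper triangular. The remaining terms are exactly \eqref{eq:blocklu:getrf-trackable}. The off-diagonal block row is handled identically: take the \((i,j)\) block with \(j>i\) and telescope over \(k=1,\dots,i-1\). This time substitute \(\Acalhat^{(i-1)}_{i,j}\) into the triangular-solve relation \eqref{eq:lem4:blocklu:trsm}, which adds \(\Delta(\LL_{i,i}\UU_{i,i+1:p})^{(\mathtt{trsv})}\) with a plus sign, matching \eqref{eq:blocklu:getrf-trsv}. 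The last block column \(i=p\) is covered by the final factorization outside the loop, which has the same form as \eqref{eq:lem4:blocklu:trf2}. The main care needed is bookkeeping: the gemm terms at step \(k\) should be indexed with \(\Acalhat^{(k-1)}\), giving the shift to \(k=0,\dots,i-2\), and the signs must match the convention in \eqref{eq:lem4:blocklu:update-trail}.

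For the case \(n_b=1\), specialize the formulas above. Here \(\LLhat_{i,i}=1\), so the triangular solve is trivial and the trsv term vanishes. This gives \eqref{eq:blocklu:getf2-upper} directly, with the block index \(i\) now a scalar row index. For \(i\geq j\), the panel step is a division: \(\UUhat_{j,j}=\Acalhat^{(j-1)}_{j,j}\) exactly, and \eqref{eq:blocklu:div} holds for \(i>j\). Inverting the division relation gives
\[
\Acalhat^{(j-1)}_{i,j}=(1+\delta^{(\mathtt{div})}_{i,j})^{-1}\LLhat_{i,j}\UUhat_{j,j}.
\]
Hence the getf2 perturbation is
\[
\LLhat_{i,j}\UUhat_{j,j}-\Acalhat^{(j-1)}_{i,j}=\bigl(1-(1+\delta^{(\mathtt{div})}_{i,j})^{-1}\bigr)\LLhat_{i,j}\UUhat_{j,j}.
\]
For the diagonal entry, take \(\delta^{(\mathtt{div})}_{j,j}:=0\); this step needs a stated convention. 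Inserting this getf2 perturbation into \eqref{eq:blocklu:getrf-trackable} yields \eqref{eq:blocklu:getf2-lower}.
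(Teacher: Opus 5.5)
Your proposal is correct and follows essentially the same route as the paper: telescope the trailing updates to express \(\Acalhat^{(i-1)}\) in terms of \(\Acal\) and the accumulated gemm and addition perturbations, substitute into the panel-factorization and triangular-solve relations, and for \(n_b=1\) invert the division relation. Your explicit convention \(\delta^{(\mathtt{div})}_{j,j}=0\) on the diagonal is forced by \(\LLhat_{j,j}=1\) and \(\UUhat_{j,j}=\Acalhat^{(j-1)}_{j,j}\); the paper leaves it implicit, and your choice is consistent with its statement.
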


\begin{proof}
    See Appendix~\ref{appendix:example:lu-proof-right}.
\end{proof}

\begin{theorem}[The left-looking block LU factorization] \label{thm:block-lu-left}
    Let \(\LL\in\mathbb{F}^{m\times n}\) and \(\UU\in\mathbb{F}^{n\times n}\)
    denote a lower trapezoidal matrix and an upper triangular matrix, respectively.
    Assume that \(\Acal = \LL\UU\) is computed by
    Algorithm~\ref{alg:block-LU-2}.
    Also, assume that Lines~\ref{line:lu:sub-lu}--\ref{line:update_trail} in
    Algorithm~\ref{alg:block-LU-2} satisfy, respectively,
    \eqref{eq:lem4:blocklu:trf2}, \eqref{eq:lem4:blocklu:trsm},
    \begin{equation} \label{eq:lem4:blocklu-2:row-gemm}
    \begin{split}
        \Acal_{i,i+1:p}-\LLhat_{i,1:i-1}\UUhat_{1:i-1,i+1:p}
        ={}&\Acalhat_{i,i+1:p}
        +\Delta(\LL_{i,1:i-1}\UU_{1:i-1,i+1:p})^{(\mathtt{gemm})}
        \bigl((i-1)n_b\bigr)\\
        &+\bigl(\Delta\Acal_{i,i+1:p}\bigr)^{(\mathtt{gemm})},
    \end{split}
    \end{equation}
    and
    \begin{equation} \label{eq:lem4:blocklu-2:gemm}
    \begin{split}
        \Acal_{i+1:q,i+1} - \LLhat_{i+1:q,1:i}\UUhat_{1:i,i+1}
        ={}& \Acalhat_{i+1:q,i+1}
        + \Delta\bigl(\LL_{i+1:q,1:i}\UU_{1:i,i+1}\bigr)^{(\mathtt{gemm})}(in_b) \\
        &+ \bigl(\Delta\Acal_{i+1:q,i+1}\bigr)^{(\mathtt{gemm})}.
    \end{split}
    \end{equation}
    Then there exists \(\Delta(\LL\UU)^{(\mathtt{getrf})}(n,n_b)\) such 
    that~\eqref{eq:blocklu:getrf} holds, where, for the \(i\)th block column with \(i=1,\ldots,p\),
    \begin{equation} \label{eq:blocklu-2:getrf-trackable}
    \begin{split}
        \Delta(\LL\UU)^{(\mathtt{getrf})}_{i:q,i}(n,n_b)
        ={}&-\Delta\bigl(\LL_{i:q,1:i-1}\UU_{1:i-1,i}\bigr)^{(\mathtt{gemm})}
        \bigl((i-1)n_b\bigr)\\
        &-\bigl(\Delta\Acal_{i:q,i}\bigr)^{(\mathtt{gemm})}
        +\Delta\bigl(\LL_{i:q,i}\UU_{i,i}\bigr)^{(\mathtt{getf2})}(n_b),
    \end{split}
    \end{equation}
    and, for the off-diagonal blocks in the \(i\)th block row with \(i=1,\ldots,p-1\),
    \begin{equation}
    \begin{split}
        \Delta(\LL\UU)^{(\mathtt{getrf})}_{i,i+1:p}(n,n_b)
        ={}&-\Delta(\LL_{i,1:i-1}\UU_{1:i-1,i+1:p})^{(\mathtt{gemm})}
        \bigl((i-1)n_b\bigr)\\
        &-\bigl(\Delta\Acal_{i,i+1:p}\bigr)^{(\mathtt{gemm})}
        +\Delta(\LL_{i,i}\UU_{i,i+1:p})^{(\mathtt{trsv})}(n_b).
        \label{eq:blocklu-2:getrf-trsv}
    \end{split}
    \end{equation}

    For the case \(n_b=1\), Algorithm~\ref{alg:block-LU-2} reduces to a
    non-block LU factorization.
    As in the conclusion for Algorithm~\ref{alg:block-LU}, 
    \texttt{XGETF2} and \texttt{XTRSV} in~\eqref{eq:lem4:blocklu:trf2} and~\eqref{eq:lem4:blocklu:trsm}, 
    respectively, reduce to divisions, 
    i.e.,~\eqref{eq:lem4:blocklu:trf2} reduces to~\eqref{eq:blocklu:div},
    and \(\Delta(\LL_{i,i}\UU_{i,i+1:p})^{(\mathtt{trsv})}(1)=0\) due to \(\LLhat_{i,i}=1\).
    Hence, there exists \(\Delta(\LL\UU)^{(\mathtt{getf2})}(n)\) such 
    that~\eqref{eq:blocklu:getf2} holds, where the entries on and below the 
    diagonal, i.e., \(i\geq j\), satisfy
    \begin{equation} \label{eq:blocklu-2:getf2-lower}
    \begin{split}
        \Delta(\LL\UU)^{(\mathtt{getf2})}_{i,j}(n)
        ={}&-\Delta\bigl(\LL_{i,1:j-1}\UU_{1:j-1,j}\bigr)^{(\mathtt{gemm})}(j-1) \\
        &-\bigl(\Delta\Acal_{i,j}\bigr)^{(\mathtt{gemm})}
        +\bigl(1-(1+\delta^{(\mathtt{div})}_{i,j})^{-1}\bigr)\LLhat_{i,j}\UUhat_{j,j},
    \end{split}
    \end{equation}
    while the entries above the diagonal, i.e., \(i<j\), satisfy \(\Delta(\LL_{ii}\UU_{ij})^{(\mathtt{trsv})}(1)=0\) and furthermore
    \begin{equation} \label{eq:blocklu-2:getf2-upper}
        \Delta(\LL\UU)^{(\mathtt{getf2})}_{i,j}(n)
        =-\Delta(\LL_{i,1:i-1}\UU_{1:i-1,j})^{(\mathtt{gemm})}(i-1)
        -\bigl(\Delta\Acal_{i,j}\bigr)^{(\mathtt{gemm})}.
    \end{equation}
\end{theorem}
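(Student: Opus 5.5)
The argument is purely algebraic: chain the assumed local relations for each block column and block row, with no bounding. In the left-looking variant each block of \(\LLhat\) and \(\UUhat\) is produced from the original data \(\Acal\) through a single accumulated \texttt{XGEMM} update. So, unlike the right-looking case in Theorem~\ref{thm:block-lu-right}, no induction over intermediate matrices \(\Acalhat^{(k)}\) is needed; each block is handled by one substitution.

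\emph{Block column \(i\), at and below the diagonal.} For \(i\geq 2\), the matrix \(\Acalhat_{i:q,i}\) fed into Line~\ref{line:lu:sub-lu} is produced in iteration \(i-1\) by \eqref{eq:lem4:blocklu-2:gemm} with index \(i-1\). That relation reads
\[
\Acalhat_{i:q,i}=\Acal_{i:q,i}-\LLhat_{i:q,1:i-1}\UUhat_{1:i-1,i}
-\Delta(\LL_{i:q,1:i-1}\UU_{1:i-1,i})^{(\mathtt{gemm})}((i-1)n_b)-(\Delta\Acal_{i:q,i})^{(\mathtt{gemm})}.
\]
I substitute this into \eqref{eq:lem4:blocklu:trf2}, i.e.\ \(\Acalhat_{i:q,i}+\Delta(\LL_{i:q,i}\UU_{i,i})^{(\mathtt{getf2})}(n_b)=\LLhat_{i:q,i}\UUhat_{i,i}\), and move \(\LLhat_{i:q,1:i-1}\UUhat_{1:i-1,i}\) to the right-hand side. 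This gives
\[
\Acal_{i:q,i}+\Delta(\LL\UU)^{(\mathtt{getrf})}_{i:q,i}(n,n_b)=\LLhat_{i:q,1:i}\UUhat_{1:i,i}=(\LLhat\UUhat)_{i:q,i},
\]
using that \(\UUhat\) is block upper triangular. The perturbation is exactly \eqref{eq:blocklu-2:getrf-trackable}. For \(i=1\) there is no update, \(\Acalhat_{1:q,1}=\Acal_{1:q,1}\), and the gemm terms are empty or zero by convention.

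\emph{Block row \(i\), right of the diagonal.} The right-hand side of the triangular solve in Line~\ref{line:lu:tr-solver} is \(\Acalhat_{i,i+1:p}\), given by \eqref{eq:lem4:blocklu-2:row-gemm}. I combine it with \eqref{eq:lem4:blocklu:trsm}, i.e.\ \(\LLhat_{i,i}\UUhat_{i,i+1:p}=\Acalhat_{i,i+1:p}+\Delta(\LL_{i,i}\UU_{i,i+1:p})^{(\mathtt{trsv})}(n_b)\), and add \(\LLhat_{i,1:i-1}\UUhat_{1:i-1,i+1:p}\) to both sides. This yields \((\LLhat\UUhat)_{i,i+1:p}=\Acal_{i,i+1:p}+\Delta(\LL\UU)^{(\mathtt{getrf})}_{i,i+1:p}\) with the perturbation \eqref{eq:blocklu-2:getrf-trsv}, which proves \eqref{eq:blocklu:getrf}.

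\emph{The case \(n_b=1\).} The columnwise \texttt{XGETF2} on an \((q-i+1)\times1\) panel sets \(\UUhat_{i,i}=\Acalhat_{i,i}\) exactly. The remaining entries come from the divisions \eqref{eq:blocklu:div}, so \(\Acalhat_{j,i}=(1+\delta^{(\mathtt{div})}_{j,i})^{-1}\LLhat_{j,i}\UUhat_{i,i}\). Hence
\[
\Delta(\LL_{j,i}\UU_{i,i})^{(\mathtt{getf2})}(1)=\bigl(1-(1+\delta^{(\mathtt{div})}_{j,i})^{-1}\bigr)\LLhat_{j,i}\UUhat_{i,i},
\]
which vanishes for \(j=i\) because \(\LLhat_{i,i}=1\) and \(\delta^{(\mathtt{div})}_{i,i}:=0\). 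Since \(\LLhat_{i,i}=1\), the \(1\times1\) triangular solve is exact, so the trsv term is zero. Substituting these into the block formulas, with indices renamed to entries \((i,j)\), gives \eqref{eq:blocklu-2:getf2-lower} and \eqref{eq:blocklu-2:getf2-upper}.

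The main obstacle is bookkeeping rather than analysis. I must confirm that the \(\Acalhat\) entering each step is exactly the one produced by the corresponding gemm relation, with the correct iteration index and inner dimension \((i-1)n_b\). I also need to handle the \(i=1\) and \(n_b=1\) diagonal edge cases consistently with the conventions for empty sums and \(\delta^{(\mathtt{div})}_{i,i}\).
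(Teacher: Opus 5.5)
Your proposal is correct and follows the paper's proof essentially step for step: you substitute the column \texttt{XGEMM} relation into the panel-factorization identity, combine the row \texttt{XGEMM} relation with the triangular-solve identity and rearrange, and then specialize to \(n_b=1\) via the division model. Your explicit treatment of the empty updates at \(i=1\) and the convention \(\delta^{(\mathtt{div})}_{i,i}=0\) on the diagonal fills in details that the paper leaves implicit.
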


\begin{proof}
    See Appendix~\ref{appendix:example:lu-proof-left}.
\end{proof}

The two variants organize the update errors differently.
In Algorithm~\ref{alg:block-LU}, the current block column or block row has
already passed through several trailing updates, so both the product errors
and the errors associated with adding each update to the current matrix appear
as sums over the preceding block steps.
In Algorithm~\ref{alg:block-LU-2}, the same preceding contributions are formed
in one longer matrix product and are added to the current matrix only once.
Consequently, a more accurate dot product can reduce the dimension dependent 
product error in the long matrix multiplication of the left-looking variant, 
whereas the repeated matrix update errors in the right-looking variant
remain.
This distinction is made quantitative by the following corollary.

The following corollary translates the trackable error representations in 
Theorems~\ref{thm:block-lu-right} and~\ref{thm:block-lu-left} into componentwise backward error bounds under the 
kernel assumptions summarized in Table~\ref{tab:notation-original}.


\begin{corollary}\label{cor:block-lu-componentwise}
Assume that the componentwise assumptions in Table~\ref{tab:notation-original}
hold with \(0\leq \epsgemmadd<1\) and \(0\leq \epsdiv<1\).
In particular, the triangular solves satisfy the componentwise backward error
bound
\[
    \abs{\Delta(U\hat X)^{(\mathtt{trsv})}(n)}\leq\epstrsv(n)\abs U\abs{\hat X}.
\]
Then the perturbations in Theorems~\ref{thm:block-lu-right} and~\ref{thm:block-lu-left} satisfy the following
componentwise bounds.
\begin{enumerate}
\item For Algorithm~\ref{alg:block-LU}, with \(p\geq2\),
\begin{equation}
\abs{\Delta(\LL\UU)^{(\mathtt{getrf})}(n,n_b)}
\leq \eta_{p-1}(n_b)\abs{\LLhat}\abs{\UUhat},
\label{eq:blocklu:getrf-componentwise}
\end{equation}
where
\begin{equation}
\eta_{p-1}(n_b):=\frac{1+\max\{\epsgemm(n_b),\epstrfvec(n_b),\epstrsv(n_b)\}}
{(1-\epsgemmadd)^{p-1}}-1.
\label{eq:blocklu:eta}
\end{equation}
For \(p=1\), only the panel factorization is required, and its bound is
\(\epstrfvec(n)\).

For the special case $n_b=1$, the scalar triangular solve
is exact under the unit-diagonal normalization, whereas the subdiagonal
multipliers are formed by divisions.
Then
\begin{equation} \label{eq:blocklu:getf2-componentwise}
\abs{\Delta(\LL\UU)^{(\mathtt{getf2})}(n)}
\leq\left(\frac{1+\max\left\{\epsgemm(1),\epsdiv/(1-\epsdiv)\right\}}
{(1-\epsgemmadd)^{n-1}}-1\right)\abs{\LLhat}\abs{\UUhat}.
\end{equation}

\item For Algorithm~\ref{alg:block-LU-2}, if \(p\geq2\) and
\(\epsgemm(n)\) is nondecreasing in \(n\), then
\begin{equation}
\begin{split}
&\abs{\Delta(\LL\UU)^{(\mathtt{getrf})}(n,n_b)} \\
&\qquad\leq\left(\frac{1+\max\{\epsgemm((p-1)n_b),\epstrfvec(n_b),\epstrsv(n_b)\}}
{1-\epsgemmadd}-1\right)\abs{\LLhat}\abs{\UUhat}.
\end{split}
\label{eq:blocklu-2:getrf-componentwise-monotone}
\end{equation}
For \(p=1\), only the panel factorization is required, and its bound is
\(\epstrfvec(n)\).

For the special case $n_b=1$ and \(n\geq2\),
\begin{equation}
\begin{split}
\abs{\Delta(\LL\UU)^{(\mathtt{getf2})}(n)}
\leq{}&\left(\frac{1+\max\left\{\epsgemm(n-1),\epsdiv/(1-\epsdiv)\right\}}{1-\epsgemmadd}-1\right)\abs{\LLhat}\abs{\UUhat}.
\end{split}
\label{eq:blocklu-2:getf2-componentwise}
\end{equation}
\end{enumerate}
\end{corollary}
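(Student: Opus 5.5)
The plan is to start from the exact trackable representations in Theorems~\ref{thm:block-lu-right} and~\ref{thm:block-lu-left}. Rather than bounding the perturbation terms $\bigl(\Delta\Acal^{(k)}\bigr)^{(\mathtt{gemm})}$ directly in terms of intermediate matrices, I will re-express every intermediate $\Acalhat^{(k)}$ through the computed factors. I am assuming that the \texttt{XGEMM} entries of Table~\ref{tab:notation-original} take the following form: the product part satisfies $\abs{\Delta(AB)^{(\mathtt{gemm})}(n)}\le\epsgemm(n)\abs A\abs B$, and the final addition is a componentwise relative perturbation of the result. Concretely, entrywise
\[
\Acalhat^{(k)}=(1+\delta^{(k)})^{-1}\bigl(\Acalhat^{(k-1)}-\LLhat\UUhat-\Delta(\LL\UU)^{(\mathtt{gemm})}\bigr),\qquad \abs{\delta^{(k)}}\le\epsgemmadd .
\]
This is the form in which the denominators $(1-\epsgemmadd)^{-1}$ in the corollary arise. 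I also assume the \texttt{XGETF2} kernel satisfies $\abs{\Delta(LU)^{(\mathtt{getf2})}(n)}\le\epstrfvec(n)\abs{\hat L}\abs{\hat U}$.

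\emph{Right-looking case.} Fix the $i$th block column (the block-row case is identical, with the \texttt{XTRSV} term in place of the \texttt{XGETF2} term). I will unroll the recursion \eqref{eq:lem4:blocklu:update-trail} from $k=1$ to $i-1$. This gives, entrywise,
\[
\Acal_{i:q,i}=\Acalhat^{(i-1)}_{i:q,i}\prod_{l=1}^{i-1}(1+\delta^{(l)})+\sum_{k=1}^{i-1}\Bigl(\LLhat_{i:q,k}\UUhat_{k,i}+\bigl(\Delta(\LL_{i:q,k}\UU_{k,i})^{(\mathtt{gemm})}(n_b)\bigr)^{(k)}\Bigr)\prod_{l=1}^{k}(1+\delta^{(l)}),
\]
and by \eqref{eq:lem4:blocklu:trf2}, $\Acalhat^{(i-1)}_{i:q,i}=\LLhat_{i:q,i}\UUhat_{i,i}-\Delta^{(\mathtt{getf2})}$. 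Subtracting $\LLhat_{i:q,1:i}\UUhat_{1:i,i}$ splits the error into two kinds of term:
\begin{itemize}
\item terms $\bigl(\prod(1+\delta^{(l)})-1\bigr)\LLhat_{\cdot,k}\UUhat_{k,\cdot}$;
\item terms (\texttt{gemm} or \texttt{getf2} perturbation)$\,\times\prod(1+\delta^{(l)})$.
\end{itemize}
Using $\prod_{l\le k}\abs{1+\delta^{(l)}}\le(1+\epsgemmadd)^{k}\le(1-\epsgemmadd)^{-(p-1)}$, each $k$-summand is bounded by
\[
\Bigl(\frac{1+\max\{\epsgemm(n_b),\epstrfvec(n_b),\epstrsv(n_b)\}}{(1-\epsgemmadd)^{p-1}}-1\Bigr)\abs{\LLhat_{\cdot,k}}\abs{\UUhat_{k,\cdot}} .
\]
Summing over $k$ and using $\sum_k\abs{\LLhat_{\cdot,k}}\abs{\UUhat_{k,\cdot}}=\abs{\LLhat}\abs{\UUhat}$ then gives \eqref{eq:blocklu:getrf-componentwise}.

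\emph{Scalar case $n_b=1$.} From \eqref{eq:blocklu:div}, $\Acalhat^{(j-1)}_{i,j}=(1+\delta^{(\mathtt{div})}_{i,j})^{-1}\LLhat_{i,j}\UUhat_{j,j}$. The panel perturbation is therefore bounded by $\abs{(1+\delta)^{-1}-1}\le\epsdiv/(1-\epsdiv)$, and this quantity plays the role of $\epstrfvec$. The scalar \texttt{XTRSV} term vanishes because $\LLhat_{i,i}=1$, and the number of update steps is at most $n-1$; this yields \eqref{eq:blocklu:getf2-componentwise}.

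\emph{Left-looking case.} The same computation is simpler here. Only one accumulation occurs, \eqref{eq:lem4:blocklu-2:gemm} or \eqref{eq:lem4:blocklu-2:row-gemm}, with a single factor $(1+\delta)$ and a single product of inner dimension $(i-1)n_b\le(p-1)n_b$. Monotonicity of $\epsgemm$ lets me replace $\epsgemm((i-1)n_b)$ by $\epsgemm((p-1)n_b)$, and the resulting exponent is $1$ instead of $p-1$. For $n_b=1$ the same division argument applies, with inner dimension at most $n-1$.

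The main obstacle is the bookkeeping in the unrolling step: converting the additive perturbations $\bigl(\Delta\Acal^{(k)}\bigr)^{(\mathtt{gemm})}$, which in the theorems are attached to unknown intermediate matrices, into multiplicative factors. Those factors must be expressed so that each is applied only to terms $\abs{\LLhat_{\cdot,k}}\abs{\UUhat_{k,\cdot}}$, with nonnegative weights and no cancellation assumed. If Table~\ref{tab:notation-original} instead states the addition error relative to the exact sum, I would first convert it via $(1-\epsgemmadd)^{-1}$, in the manner of Lemma~\ref{lem:gamma_property}; that conversion is where the $1/(1-\epsgemmadd)$ form comes from.
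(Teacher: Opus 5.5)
Your proof is correct in substance but takes a different route from the paper's. It needs one fix: the addition-error model you assume is not the one in Table~\ref{tab:notation-original}.

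\textbf{Comparison of routes.} The paper never converts the addition errors into multiplicative factors. It bounds each labelled term of the additive sums \eqref{eq:blocklu:getrf-trackable}--\eqref{eq:blocklu:getrf-trsv} separately. For the terms attached to intermediate blocks, it runs a backward inequality recursion, $(1-\epsgemmadd)\abs{\Acalhat^{(k-1)}}\le\abs{\Acalhat^{(k)}}+(1+\epsgemm(n_b))\abs{\LLhat_{i,k}}\abs{\UUhat_{k,\cdot}}$, started from the \texttt{XTRSV}/\texttt{XGETF2} relation. This gives $\abs{\Acalhat^{(k-1)}}\le(1+M)(1-\epsgemmadd)^{-(i-k)}(\abs{\LLhat}\abs{\UUhat})$, with $M=\max\{\epsgemm(n_b),\epstrsv(n_b)\}$. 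The result then follows from $\epsgemmadd\sum_{j=1}^{i-1}(1-\epsgemmadd)^{-j}=(1-\epsgemmadd)^{-(i-1)}-1$. In the left-looking case it uses $\abs{\Acal}\le\abs{\LLhat}\abs{\UUhat}+\abs{\Delta}$ and solves for $\abs{\Delta}$.

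Your exact unrolling removes the intermediate matrices altogether, so only scalar factors need bounding. It also gives a slightly finer term-by-term bound, with exponent $\min(k,i-1)$ on the $k$th term. The paper's route keeps each error source visible, which is the point of the framework, and uses the additive assumption exactly as stated.

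\textbf{The fix.} Table~\ref{tab:notation-original} measures the addition error relative to the \emph{input} matrix: $\abs{(\Delta C)^{(\mathtt{gemm})}}\le\epsgemmadd\abs{\alpha C}$, with $C=\Acalhat^{(k-1)}$ (or $C=\Acal$ in the left-looking case). It is relative neither to the result nor to the exact sum. Write the error as $\theta_k\circ\Acalhat^{(k-1)}$ with $\abs{\theta_k}\le\epsgemmadd$, and let $\Delta_k$ denote the \texttt{XGEMM} product error. Then $(1-\theta_k)\circ\Acalhat^{(k-1)}=\Acalhat^{(k)}+\LLhat_{\cdot,k}\UUhat_{k,\cdot}+\Delta_k$.

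So your $1+\delta^{(l)}$ must be $(1-\theta_l)^{-1}$. Its modulus is bounded by $(1-\epsgemmadd)^{-1}$, not by $1+\epsgemmadd$, so your step $\prod_{l\le k}\abs{1+\delta^{(l)}}\le(1+\epsgemmadd)^k$ fails under the actual hypothesis. With this substitution, your displayed identity with $\prod_{l=1}^{k}$ becomes exactly right. Under your own result-relative model, the $k$th product should have stopped at $l=k-1$.

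With $P_k=\prod_{l=1}^{k}(1-\theta_l)^{-1}$, you get $\abs{P_k}\le(1-\epsgemmadd)^{-k}$ and $\abs{1-P_k}\le(1-\epsgemmadd)^{-k}-1$. Since $k\le p-1$, these still give $\eta_{p-1}(n_b)$, so the conclusion is unaffected. The same correction applies to the single factor in the left-looking case.
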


\begin{proof}
    See Appendix~\ref{appendix:cor:block-lu-componentwise}.
\end{proof}

\begin{table}[!tb]
\centering
\caption{Componentwise assumptions for the local rounding errors in
Table~\ref{tab:notation}. The symbols \(\epsilon_{*}\) denote error constants
associated with the corresponding operations.}
\label{tab:notation-original}
\begin{spacing}{1.25}
\medskip
\resizebox{\textwidth}{!}{%
\begin{tabular}{cccc}
\hline
Operation & \begin{tabular}[c]{@{}c@{}}BLAS/LAPACK\\ Subroutine\end{tabular}
& Error quantity in Table~\ref{tab:notation} & Componentwise assumption\\ \hline
addition & -- & \(\delta^{(\mathtt{add})}\)
& \(\abs{\delta^{(\mathtt{add})}}\leq\epsadd\)\\
\hline
multiplication & -- & \(\delta^{(\mathtt{mult})}\)
& \(\abs{\delta^{(\mathtt{mult})}}\leq\epsmult\)\\
\hline
division & -- & \(\delta^{(\mathtt{div})}\)
& \(\abs{\delta^{(\mathtt{div})}}\leq\epsdiv\)\\
\hline
dot product & \texttt{XDOT}/\texttt{XDOTC} & \(\Delta x^{(\mathtt{dot})}(n)\)
& \(\abs{\Delta x^{(\mathtt{dot})}(n)}\leq\epsdot(n)\abs{x}\)\\
\hline
\begin{tabular}[c]{@{}c@{}}matrix--matrix\\ multiplication\end{tabular}
& \texttt{XGEMM}
& \begin{tabular}[c]{@{}c@{}}\(\Delta(AB)^{(\mathtt{gemm})}(n)\),\\ \((\Delta C)^{(\mathtt{gemm})}\)\end{tabular}
& \begin{tabular}[c]{@{}c@{}}\(\abs{\Delta(AB)^{(\mathtt{gemm})}(n)}\leq\epsgemm(n)\abs A\abs B\),\\
\(\abs{(\Delta C)^{(\mathtt{gemm})}}\leq
\epsgemmadd\abs{\alpha C}\)\end{tabular}\\
\hline
triangular solver & \texttt{XTRSM}/\texttt{XTRSV}
& \(\Delta(U\hat X)^{(\mathtt{trsv})}(n)\)
& \(\abs{\Delta(U\hat X)^{(\mathtt{trsv})}(n)}
\leq\epstrsv(n)\abs U\abs{\hat X}\)\\
\hline
\begin{tabular}[c]{@{}c@{}}non-block\\ LU factorization\end{tabular}
& \texttt{XGETF2}
& \(\Delta(LU)^{(\mathtt{getf2})}(n)\)
& \(\abs{\Delta(LU)^{(\mathtt{getf2})}(n)}\leq
\epstrfvec(n)\abs{\hat L}\abs{\hat U}\)\\ \hline
\end{tabular}}
\end{spacing}
\end{table}

\begin{remark}
For the right-looking block LU algorithm with a fixed block size
\(n_b\ll n\) and \(\epsgemmadd>0\),~\eqref{eq:blocklu:eta} shows that the 
matrix--matrix multiplication, panel factorization, and
triangular solve error constants depend only on \(n_b\), while the dependence
on the number of blocks, i.e., \(p\), enters explicitly through
\((1-\epsgemmadd)^{-(p-1)}\).
Thus, this bound identifies the repeatedly performed matrix subtractions as 
the source of error accumulation related to \(p\) (i.e., \(n\)) across block steps.
Computing these matrix subtractions in higher precision, or using an error-free transformation together with suitable compensated accumulation, can 
effectively reduce \(\epsgemmadd\) and hence mitigate this \(p\)-dependent
growth.
This observation is specific to the right-looking block LU algorithm.
In the left-looking variant, \(\epsgemm((p-1)n_b)\) also depends on the
increasing inner dimension of the matrix multiplication.
\end{remark}

\subsection{Error bounds for different computational kernels and arithmetic}
\label{subsec:diff-setting}
The trackable analysis above separates the global error structure from the error properties of the individual computational kernels. In this subsection, we demonstrate the modularity of the framework, instantiating the abstract error constants in Table~\ref{tab:notation-original} for different arithmetic and kernel implementations. This allows the corresponding LU factorization bounds to be obtained directly from the preceding results, while also showing how improved kernels can be incorporated into the analysis.

\subsubsection{Error bounds for real arithmetic using \(\gamman\)}
\label{remark:ourgamman-real}
We first specialize the componentwise assumptions in Table~\ref{tab:notation-original}
to real floating point arithmetic using our new notation \(\gamman=(1-\macheps)^{-n}-1\).
Applying Lemma~\ref{lem:gamma_property} to~\cite[(3.2)]{Higham2002} gives
\(\epsdot(n)=\gamman\).
For the scalar operations, \(\epsadd=\epsmult=\epsdiv=\macheps\).
For matrix--matrix multiplication, it should be noted that \(\epsgemm(n)\) 
and \(\epsgemmadd\) depend on the actual implementation.
Here, we first compute \(AB\), then form \(\alpha C\), and finally add the two matrices.%
\footnote{The reference BLAS \texttt{XGEMM} implementation,
 as documented on the LAPACK reference pages (https://www.netlib.org/lapack/explore-html/index.html), can use a different evaluation
 order.
 For example, in the no-transpose branch of \texttt{DGEMM}, \(C\) is first
 set to zero, left unchanged, or scaled according to \(\alpha\), and the products
 \(A_{ik}B_{kj}\) are then accumulated into \(C_{ij}\) one at a time.
 Under the usual model in which each multiplication and addition is rounded
 separately, this evaluation order leads to the bounds stated below.
 For the operation \(AB+\alpha C\) considered here, in real arithmetic, this gives \(\epsgemm(n)=\gamma_n\) for \(\alpha=0\)
 and \(\epsgemm(n)=\gamma_{n+1}\) otherwise.
 Valid bounds are \(\epsgemmadd(n)=0\), \(\gamma_n\), and
 \(\gamma_{n+1}\) for \(\alpha=0\), \(\alpha=1\), and
 \(\alpha\notin \{0,1\}\), respectively.
 Thus, under this implementation-specific model, the second constant is
 dimension dependent.
 The simpler evaluation order stated in the text is used throughout the
 analysis.}
Thus, a componentwise decomposition consistent with
Table~\ref{tab:notation-original} is obtained with
\[
    \epsgemm(n)=
    \begin{cases}
        \gamma_n, & \alpha=0,\\
        \gamma_{n+1}, & \alpha\neq0,
    \end{cases}
    \qquad\epsgemmadd=
    \begin{cases}
        0, & \alpha=0,\\
        \macheps, & \alpha=1,\\
        2\macheps+\macheps^2, & \alpha\notin \{0,1\}.
    \end{cases}
\]
The extra index in \(\epsgemm(n)\) for \(\alpha\neq0\) accounts for the
part of the final matrix addition that multiplies the computed \(AB\) term.
The constant \(\epsgemmadd\) accounts for the part associated with
\(\alpha C\).
For the LU updates below, the minus sign is absorbed into one factor of the
matrix product, so that \(\alpha=1\).
Hence, \(\epsgemm(n)=\gamma_{n+1}\) and \(\epsgemmadd=\macheps\) for these updates.

Substituting these constants into Lemma~\ref{lem:triangular-solver-xdot} gives
\(\epstrsv(n)=\max(\gamma_{n-1},\gamma_2)\) for Algorithm~\ref{alg:trsv-dot}, 
and into Lemma~\ref{lem:triangular-solver-xaxpy} gives \(\epstrsv(n)=\gamma_n\) for
Algorithm~\ref{alg:trsv-axpy}.
The bounds for the non-block LU factorization are obtained from the \(n_b=1\) 
specialization of Corollary~\ref{cor:block-lu-componentwise}, rather than 
directly from Theorems~\ref{thm:block-lu-right} and~\ref{thm:block-lu-left}.
For Algorithm~\ref{alg:block-LU}, the \(n_b=1\) specialization can be written directly in terms of our \(\gamma_n\) notation as
\[
    \epstrfvec(n)=\frac{1+\gamma_2}{(1-\macheps)^{n-1}}-1=\gamma_{n+1},
    \qquad n\geq2.
\]
For Algorithm~\ref{alg:block-LU-2}, Corollary~\ref{cor:block-lu-componentwise}
gives
\[
\epstrfvec(n)=\frac{1+\gamma_n}{1-\macheps}-1=\gamma_{n+1}.
\]
\begin{remark}
The extra index relative to a direct \(\gamma_n\) bound results from
applying Corollary~\ref{cor:block-lu-componentwise} to separate kernel-level
bounds, without retaining the correlation between the product and update
rounding errors.
As Tables~\ref{tab:notation} and~\ref{tab:notation-original} specify the
local error identities and their componentwise bounds, respectively, different
kernel assumptions may yield different rounding error bounds for the same
structural analysis.
Thus the two non-block variants have the same bound under the kernel-level
assumptions used here, although their trackable error decompositions remain
different.
\end{remark}

These results are summarized in Table~\ref{table:basicop-gamman-real}.

\begin{table}[!tb]
\centering
\renewcommand{\arraystretch}{1.5}
\caption{Floating point error bounds for basic real arithmetic operations using
our \(\gamma_n\) notation.  The kernel-level constants satisfy the componentwise
assumptions in Table~\ref{tab:notation-original}; the non-block LU bounds are
obtained by substituting these constants into Corollary~\ref{cor:block-lu-componentwise}
with \(n_b=1\).
For simplicity, we assume that \(AB+\alpha C\) is evaluated by first
computing the matrix product \(AB\), then forming \(\alpha C\), and finally
adding the two matrices.}
\label{table:basicop-gamman-real}
\begin{tabular}{c|c|c}
\hline
Operations & Algo & Floating-point error bounds \\
\hline
dot product & & \(\epsdot(n)=\gamma_n\) \\
\hline
addition & & \(\epsadd=\macheps\) \\
\hline
multiplication & & \(\epsmult=\macheps\) \\
\hline
division & & \(\epsdiv=\macheps\) \\
\hline
matrix--matrix multiplication & & \(
\begin{gathered}
\epsgemm(n)=
\begin{cases}
\gamma_n, & \alpha=0,\\
\gamma_{n+1}, & \alpha\neq0,
\end{cases}\\
\epsgemmadd=
\begin{cases}
0, & \alpha=0,\\
\macheps, & \alpha=1,\\
2\macheps+\macheps^2, & \alpha\notin \{0,1\}.
\end{cases}
\end{gathered}\) \\
\hline
\multirow{2}{*}{triangular solver}
& Algo~\ref{alg:trsv-dot} & \(\epstrsv(n)=\max\{\gamma_{n-1},\gamma_2\}\) \\
& Algo~\ref{alg:trsv-axpy} & \(\epstrsv(n)=\gamma_n\) \\
\hline
\multirow{2}{*}{non-block LU factorization}
& Algo~\ref{alg:block-LU} & \(\epstrfvec(n)=\gamma_{n+1}\) \\
& Algo~\ref{alg:block-LU-2} & \(\epstrfvec(n)=\gamma_{n+1}\) \\
\hline
\end{tabular}
\end{table}

\subsubsection{Error bounds for complex arithmetic using \(\gamman\)}
\label{remark:ourgamman}
We next consider complex arithmetic with \(\gamma_n=(1-\macheps)^{-n}-1\).
Assume that for complex numbers
\(x=a+\mi b\) and \(y=c+\mi d\), we compute
\begin{align}
    x\pm y &= a\pm c+\mi (b\pm d), \label{eq:complex:add-minus}\\
    xy &= ac-bd+\mi (ad+bc), \label{eq:complex:times}\\
    \frac{x}{y} &= \frac{ac+bd}{c^2+d^2}+\mi\frac{bc-ad}{c^2+d^2}.
    \label{eq:complex:div}
\end{align}

\begin{lemma}\label{lem:complex}
For \(x,y\in\mathbb C\), the basic arithmetic operations computed according
to~\eqref{eq:complex:add-minus}--\eqref{eq:complex:div} satisfy
\begin{align}
\fl(x\pm y)&=(x\pm y)(1+\delta), & \abs{\delta}&\leq\macheps,
\label{eq:lem-complex-er:add-minus}\\
\fl(xy)&=xy(1+\delta), & \abs{\delta}&\leq\sqrt{2}\,\gamma_2,
\label{eq:lem-complex-er:times}\\
\fl\bigl(x/y\bigr)&=(x/y)(1+\delta), & \abs{\delta}&\leq\sqrt{2}\,\gamma_5.
\label{eq:lem-complex-er:div}
\end{align}
\end{lemma}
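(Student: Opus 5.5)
The plan is to treat each of the three operations componentwise. For each one we first write the computed real and imaginary parts using the real rounding model~\eqref{eq:model0}. We then collect the relative errors of the two parts into one complex relative error \(\delta\), and bound \(\abs{\delta}\) using Lemma~\ref{lem:gamma_property}.

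\textbf{Addition and subtraction.} We have \(\fl(x\pm y)=(a\pm c)(1+\epsilon_1)+\mi(b\pm d)(1+\epsilon_2)\) with \(\abs{\epsilon_k}\leq\macheps\). Hence the error is \(E=(a\pm c)\epsilon_1+\mi(b\pm d)\epsilon_2\), and
\[
\abs{E}^2\leq\macheps^2\bigl((a\pm c)^2+(b\pm d)^2\bigr)=\macheps^2\abs{x\pm y}^2 .
\]
Setting \(\delta=E/(x\pm y)\) gives~\eqref{eq:lem-complex-er:add-minus}. The case \(x\pm y=0\) is trivial, since then both computed parts are exactly zero.

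\textbf{Multiplication.} The real part is \(\fl(ac-bd)=(ac(1+\epsilon_1)-bd(1+\epsilon_2))(1+\epsilon_3)\). By~\eqref{eq:lem-property:0} this equals \(ac(1+\theta)-bd(1+\theta')\) with \(\abs{\theta},\abs{\theta'}\leq\gamma_2\). So the error in the real part is at most \(\gamma_2(\abs{ac}+\abs{bd})\), and the same holds for the imaginary part with \(\abs{ad}+\abs{bc}\). It follows that
\[
\abs{E}^2\leq\gamma_2^2\bigl((\abs{ac}+\abs{bd})^2+(\abs{ad}+\abs{bc})^2\bigr).
\]
Expanding, the right-hand side equals \(\gamma_2^2\bigl(\abs x^2\abs y^2+4\abs{abcd}\bigr)\). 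Since \(2\abs{ab}\leq\abs x^2\) and \(2\abs{cd}\leq\abs y^2\), we get \(4\abs{abcd}\leq\abs x^2\abs y^2\). This yields \(\abs E\leq\sqrt2\,\gamma_2\abs{xy}\), which is~\eqref{eq:lem-complex-er:times}.

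\textbf{Division.} The denominator is computed as \(\fl(c^2+d^2)=(c^2(1+\epsilon_4)+d^2(1+\epsilon_5))(1+\epsilon_6)\), which we write as \((c^2+d^2)(1+\theta_2'')\) with \(\abs{\theta_2''}\leq\gamma_2\). This holds because both terms are nonnegative, so the common factor can be pulled out: \(c^2(1+\epsilon_4)+d^2(1+\epsilon_5)=(c^2+d^2)(1+\epsilon)\) with \(\abs\epsilon\leq\macheps\).

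Each numerator is computed like a real or imaginary part of a product, contributing a factor of the form \(ac(1+\theta_2)+bd(1+\theta_2')\). The final division introduces one more factor \((1+\delta^{(\mathtt{div})})\). Combining the numerator factor (\(\gamma_2\)), the inverse of the denominator factor (\(\gamma_2\)) and the division factor (\(\gamma_1\)) with~\eqref{eq:lem-property:0} and~\eqref{eq:lem-property:4}, each of the terms \(ac/(c^2+d^2)\), \(bd/(c^2+d^2)\), \(bc/(c^2+d^2)\) and \(ad/(c^2+d^2)\) is perturbed by a factor \(1+\theta_5\) with \(\abs{\theta_5}\leq\gamma_5\). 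This step is where the \(\gamma_n\) notation pays off: the inverse factor \((1+\theta_2'')^{-1}\) is absorbed without any restriction on \(n\macheps\).

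The error in the real part is then at most \(\gamma_5(\abs{ac}+\abs{bd})/\abs y^2\), and the error in the imaginary part is at most \(\gamma_5(\abs{bc}+\abs{ad})/\abs y^2\). The same Cauchy--Schwarz-type estimate used for multiplication gives
\[
\abs E\leq\sqrt2\,\gamma_5\,\frac{\abs x\abs y}{\abs y^2}=\sqrt2\,\gamma_5\abs{x/y},
\]
which is~\eqref{eq:lem-complex-er:div}.

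\textbf{Main obstacle.} The delicate step is the division bookkeeping. We must make sure the denominator perturbation can be expressed as a single relative factor, which relies on the nonnegativity of \(c^2\) and \(d^2\). We must also make sure its inverse counts as only \(\gamma_2\) under~\eqref{eq:lem-property:0}, so that the exponents sum to exactly 5 rather than more.
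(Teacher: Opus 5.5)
Your proof is correct and is essentially the paper's argument. The paper defers to Higham's proof of \cite[Lemma~3.5]{Higham2002}: componentwise rounding, the denominator written as $(c^2+d^2)(1+\theta_2)$ using $c^2,d^2\ge0$, and the estimate $(\abs{ac}+\abs{bd})^2+(\abs{ad}+\abs{bc})^2\le 2\abs{x}^2\abs{y}^2$. It then adds the one rounding error from the final division, which is exactly why your five factors $(1+\delta_i)^{\pm1}$ combine via \eqref{eq:lem-property:0} to $\gamma_5$ rather than Higham's $\gamma_4$.
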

\begin{proof}
See Appendix~\ref{appendix:example:lu-ourgamman}.
\end{proof}
Lemma~\ref{lem:complex} also holds with Higham's \(\gammanh\) notation.
The division bound depends on the implementation.
In particular, overflow-avoiding algorithms for complex and
quaternion division may introduce additional rounding errors
and thus require larger error bounds.

Using Lemma~\ref{lem:complex} together with the standard complex dot product
analysis gives the componentwise constants
\[
 \epsdot(n)=\sqrt2\gamma_{n+1},\quad
 \epsadd=\macheps,\quad
 \epsmult=\sqrt2\gamma_2,\quad
 \epsdiv=\sqrt2\gamma_5,
\]
and, for matrix--matrix multiplication,
\[
 \epsgemm(n)=
 \begin{cases}
 \sqrt2\gamma_{n+1}, & \alpha=0,\\
 \sqrt2\gamma_{n+2}, & \alpha\neq0,
 \end{cases}
 \qquad
 \epsgemmadd=
 \begin{cases}
 0, & \alpha=0,\\
 \macheps, & \alpha=1,\\
 \sqrt2\gamma_2+\macheps
 +\sqrt2\macheps\gamma_2, & \alpha\notin \{0,1\}.
 \end{cases}
\]
For the LU updates, \(\alpha=1\), and therefore \(\epsgemm(n)=\sqrt2\gamma_{n+2}\)
and \(\epsgemmadd=\macheps\).

For the two triangular solvers, Lemmas~\ref{lem:triangular-solver-xdot}
and~\ref{lem:triangular-solver-xaxpy} give the following bounds for \(n\geq2\).
For Algorithm~\ref{alg:trsv-dot}, we take
\begin{equation*}
\begin{split}
\epstrsv(n)&=\max\left\{\sqrt2\gamma_n,\frac{1}{(1-\sqrt2\gamma_5)(1-\macheps)}-1\right\} \\
&=\max\left\{\sqrt2\gamma_n,\frac{\gamma_1+\sqrt2\gamma_5}{1-\sqrt2\gamma_5}\right\}\\
&\leq\frac{\sqrt2}{1-\sqrt2\gamma_5}\gamma_{\max\{n,6\}}.
\end{split}
\end{equation*}
For Algorithm~\ref{alg:trsv-axpy}, we have
\begin{equation*}
\begin{split}
\epstrsv(n)
&=\frac{1}{(1-\sqrt2\gamma_5)(1-\macheps)^{n-1}}-1
=\frac{\gamma_{n-1}+\sqrt2\gamma_5}{1-\sqrt2\gamma_5}
\leq\frac{\sqrt2}{1-\sqrt2\gamma_5}\gamma_{n+4}.
\end{split}
\end{equation*}
Here the last inequalities use~\eqref{eq:lem-property:4} shown in Lemma~\ref{lem:gamma_property}, which yields
\(\gamma_{n-1}+\sqrt2\gamma_5\leq\sqrt2\gamma_{n+4}\) and 
\(\gamma_1+\sqrt2\gamma_5\leq\sqrt2\gamma_6\).

For the non-block LU factorization, the \(n_b=1\) case of
Corollary~\ref{cor:block-lu-componentwise} gives
\[
 \epstrfvec(n)
 =\frac{1}{(1-\sqrt2\gamma_5)(1-\macheps)^{n-1}}-1
 =\frac{\gamma_{n-1}+\sqrt2\gamma_5}{1-\sqrt2\gamma_5}
 \leq\frac{\sqrt2}{1-\sqrt2\gamma_5}\gamma_{n+4},
 \qquad n\geq2,
\]
for Algorithm~\ref{alg:block-LU}, which requires \(\sqrt2\gamma_5<1\).
For Algorithm~\ref{alg:block-LU-2}, substituting
\(\epsgemm(n-1)=\sqrt2\gamma_{n+1}\),
\(\epsgemmadd=\macheps\), and
\(\epsdiv=\sqrt2\gamma_5\) into
\eqref{eq:blocklu-2:getf2-componentwise}, we have
\begin{equation*}
\begin{split}
\epstrfvec(n)
&=\frac{1+\max\left\{\sqrt2\gamma_{n+1},
\dfrac{\sqrt2\gamma_5}{1-\sqrt2\gamma_5}\right\}}{1-\macheps}-1\\
&=\max\left\{\frac{1+\sqrt2\gamma_{n+1}}{1-\macheps}-1,
\frac{\gamma_1+\sqrt2\gamma_5}{1-\sqrt2\gamma_5}\right\}\\
&\leq\frac{\sqrt2}{1-\sqrt2\gamma_5}\gamma_{\max\{n+2,6\}},
\end{split}
\end{equation*}
for \(n\geq2\).
It should be noted that all four bounds above require only \(\sqrt2\gamma_5<1\).
If, in addition, \(\sqrt2\gamma_5\leq1/2\), which is ensured, for example,
by the dimension-independent condition \(\macheps\leq1/20\), the factor
\(\sqrt2/(1-\sqrt2\gamma_5)\) is at most \(2\sqrt2\).
We summarize these simplified bounds in Table~\ref{table:basicop-gamman-complex}.

\begin{remark}
Although our notation \(\gamma_n=(1-\macheps)^{-n}-1\)
does not eliminate the need for small-roundoff assumptions,
the complex TRSV and non-block LU bounds above require only
\(\sqrt2\gamma_5<1\), or the stronger condition
\(\sqrt2\gamma_5\leq1/2\) when the constants are further simplified.
Both conditions depend only on \(\macheps\), not on the matrix dimension \(n\).
In contrast, Higham's notation
\(\gammanh=n\macheps/(1-n\macheps)\) requires \(n\macheps<1\).
Thus, our conditions do not impose dimension-dependent restrictions 
on \(n\) and are considerably less restrictive for large-scale
problems.
\end{remark}

\begin{remark}
A common simplification in rounding error analysis is to
use a single worst-case scalar operation bound for addition, multiplication,
and division.
For complex arithmetic, this amounts to using the division
bound \(\sqrt2\gamma_5\) for all three operations, although 
\(\epsadd=\macheps\), \(\epsmult=\sqrt2\gamma_2\), and
\(\epsdiv=\sqrt2\gamma_5\).
Since the division bound is substantially larger, applying it uniformly
to repeated additions and multiplications can lead to an overestimate of the 
total rounding error.
Our trackable analysis retains these operation-specific bounds separately
throughout the error propagation, which enables tighter triangular solve and
LU bounds when this distinction is significant.
\end{remark}

\begin{table}[!tb]
\centering
\renewcommand{\arraystretch}{1.5}
\caption{Floating point error bounds for basic complex arithmetic operations
using our \(\gamma_n\) notation.  The kernel-level constants satisfy the
componentwise assumptions in Table~\ref{tab:notation-original}; the non-block
LU bounds follow from Corollary~\ref{cor:block-lu-componentwise} with \(n_b=1\).
For simplicity, we assume that \(AB+\alpha C\) is evaluated by first
computing the matrix product \(AB\), then forming \(\alpha C\), and finally
adding the two matrices.
For triangular solver and non-block LU factorization, we additionally assume
\(\sqrt2\gamma_5\leq1/2\), which holds in particular for
\(\macheps\leq1/20\).}
\label{table:basicop-gamman-complex}
\begin{tabular}{c|c|c}
\hline
Operations & Algo & Floating-point error bounds \\
\hline
dot product & & \(\epsdot(n)=\sqrt2\gamma_{n+1}\) \\
\hline
addition & & \(\epsadd=\macheps\) \\
\hline
multiplication & & \(\epsmult=\sqrt2\gamma_2\) \\
\hline
division & & \(\epsdiv=\sqrt2\gamma_5\) \\
\hline
\shortstack[c]{matrix--matrix\\multiplication} &
& \(\begin{gathered}
\epsgemm(n)=
\begin{cases}
\sqrt2\gamma_{n+1}, & \alpha=0,\\
\sqrt2\gamma_{n+2}, & \alpha\neq0,
\end{cases}\\
\epsgemmadd=
\begin{cases}
0, & \alpha=0,\\
\macheps, & \alpha=1,\\
\sqrt2\gamma_2+\macheps+\sqrt2\macheps\gamma_2,
& \alpha\notin \{0,1\}
\end{cases}
\end{gathered}\) \\
\hline
\multirow{2}{*}{triangular solver}
& Algo~\ref{alg:trsv-dot} & \(\epstrsv(n)=2\sqrt2\gamma_{\max\{n,6\}}\) \\
& Algo~\ref{alg:trsv-axpy} & \(\epstrsv(n)=2\sqrt2\gamma_{n+4}\) \\
\hline
\multirow{2}{*}{\shortstack[c]{non-block LU\\ factorization}}
& Algo~\ref{alg:block-LU} & \(\epstrfvec(n)=2\sqrt2\gamma_{n+4}\) \\
& Algo~\ref{alg:block-LU-2} & \(\epstrfvec(n)=2\sqrt2\gamma_{\max\{n+2,6\}}\) \\
\hline
\end{tabular}
\end{table}

\subsubsection{Error bounds for quaternion arithmetic using \(\gamman\)}
\label{remark:quaternion}
Finally, we consider quaternion arithmetic using \(\gamman\).
For
\(x=x_1+\mi x_2+\mj x_3+\mk x_4\) and \(y=y_1+\mi y_2+\mj y_3+\mk y_4\), assume
\begin{align}
 x\pm y&=x_1\pm y_1+\mi(x_2\pm y_2)+\mj(x_3\pm y_3)+\mk(x_4\pm y_4),
 \label{eq:quaternion:add-minus}\\
 xy&=(x_1y_1-x_2y_2-x_3y_3-x_4y_4)
 +\mi(x_1y_2+x_2y_1+x_3y_4-x_4y_3)\notag\\
 &\quad+\mj(x_1y_3-x_2y_4+x_3y_1+x_4y_2)
 +\mk(x_1y_4+x_2y_3-x_3y_2+x_4y_1),
 \label{eq:quaternion:times}\\
 xy^{-1}&=x\Bar y\abs y^{-2}.
 \label{eq:quaternion:div}
\end{align}

\begin{lemma}\label{lem:quaternion}
For quaternion numbers \(x\) and \(y\), the operations above satisfy
\begin{align}
 \fl(x\pm y)&=(x\pm y)(1+\delta_r)=(1+\delta_\ell)(x\pm y),
 & \abs{\delta_r},\abs{\delta_\ell}&\leq\macheps,
 \label{eq:lem-quaternion-er:add-minus}\\
 \fl(xy)&=xy(1+\delta_r)=(1+\delta_\ell)xy=x(1+\delta_m)y,
 & \abs{\delta_r},\abs{\delta_\ell}, \abs{\delta_m}&\leq2\gamma_4,
 \label{eq:lem-quaternion-er:times}\\
 \fl\bigl(xy^{-1}\bigr)&=(xy^{-1})(1+\delta_r)=(1+\delta_\ell)(xy^{-1})=x(1+\delta_m)y^{-1},
 & \abs{\delta_r},\abs{\delta_\ell}, \abs{\delta_m}&\leq\gamma_{13}.
 \label{eq:lem-quaternion-er:div}
\end{align}
Here \(\delta_r\), \(\delta_\ell\), and \(\delta_m\) need not be the same quaternion.
All representations are stated explicitly because quaternion multiplication is noncommutative.
\end{lemma}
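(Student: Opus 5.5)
The plan is to treat the three operations separately. Each is a collection of real floating-point computations, one per component. In each case we first obtain a \emph{componentwise} relative bound and then convert it into a quaternion relative perturbation. For the conversion we use that quaternion norms are multiplicative, $\abs{xy}=\abs x\abs y$. So if $\fl(z)=z+e$ with $\abs e\leq\epsilon\abs z$, then the three quantities $\delta_r=z^{-1}e$, $\delta_\ell=ez^{-1}$ and $\delta_m=x^{-1}e\,y^{-1}$ (for $z=xy$) all have modulus $\abs e/\abs z\leq\epsilon$. This single observation gives all three representations (right, left, middle) at once; for division the middle form uses $\delta_m=x^{-1}e\,y$, valid since $z=xy^{-1}$. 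The degenerate cases $x=0$ or $y=0$ are trivial.

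\emph{Addition.} Each of the four components is a single real addition, so $\fl(x\pm y)_k=(x_k\pm y_k)(1+\epsilon_k)$ with $\abs{\epsilon_k}\leq\macheps$. Hence $\abs{e}\leq\macheps\abs{x\pm y}$, and the observation above gives~\eqref{eq:lem-quaternion-er:add-minus}.

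\emph{Multiplication.} Each component of~\eqref{eq:quaternion:times} is a four-term real dot product. By~\eqref{eq:lem-property:0} in Lemma~\ref{lem:gamma_property} (the recursive summation analysis), the error in component $k$ is bounded by $\gamma_4\sum_j\abs{x_{a_j}y_{b_j}}$. The four sums $\sum_j\abs{x_{a_j}y_{b_j}}$ are the components of the product of the vectors $(\abs{x_i})$ and $(\abs{y_i})$ under the same sign pattern with all signs positive. We therefore sum the squares over $k$ and use Cauchy--Schwarz: each $\sum_j\abs{x_{a_j}}\abs{y_{b_j}}\leq\abs x\abs y$. A cruder estimate gives $\abs e\leq\gamma_4\cdot2\abs x\abs y$, since the norm of the vector of these four sums is at most $2\abs x\abs y$. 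The sharp constant is not needed, only the factor $2$, so we can simply use $\bigl\|(\sum_j\abs{x_{a_j}}\abs{y_{b_j}})_k\bigr\|_2\leq\sqrt4\max_k(\cdot)\leq 2\abs x\abs y$. This gives $\abs e\leq2\gamma_4\abs{xy}$, and hence~\eqref{eq:lem-quaternion-er:times}.

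\emph{Division.} We follow the evaluation order in~\eqref{eq:quaternion:div}. The norm $\abs y^2=\sum y_i^2$ is computed with $4$ multiplications and $3$ additions, so its computed value is $\abs y^2(1+\theta_4)$; conjugation is exact. The product $x\bar y$ is computed with error vector $e_1$, $\abs{e_1}\leq2\gamma_4\abs x\abs y$. Then each component of the product is divided by the computed norm, which gives a factor $(1+\delta)(1+\theta_4)^{-1}$ per component. The main obstacle is the bookkeeping here. The multiplication error is not of the pure $(1+\delta)$ product form, but is a vector bound with the factor $2$. A naive combination gives $(1+2\gamma_4)(1+\gamma_5)-1$, which must be shown to be at most $\gamma_{13}$. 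We plan to use~\eqref{eq:lem-property:2} to get $2\gamma_4\leq\gamma_8$ and then~\eqref{eq:lem-property:4} to get $(1+\gamma_8)(1+\gamma_5)-1=\gamma_{13}$. Here the $\gamma_5$ comes from $\gamma_4$ for the norm and $\macheps$ for the division, combined via~\eqref{eq:lem-property:0} as five factors $(1+\delta_i)^{\pm1}$. This yields $\abs{\fl(xy^{-1})-xy^{-1}}\leq\gamma_{13}\abs x\abs y^{-1}$, and the multiplicativity argument then gives all three forms in~\eqref{eq:lem-quaternion-er:div}.
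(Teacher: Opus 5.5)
Your proposal is correct and follows essentially the same route as the paper. Both arguments do a componentwise real error analysis of each operation (length-$4$ dot products for the product, a single $\theta_4$ factor for $\abs{y}^2$), convert the result to a quaternion relative perturbation using multiplicativity of the norm (your $\delta_\ell=ez^{-1}$ is exactly the paper's $z\delta_r z^{-1}$), and close with the same estimate $(1+2\gamma_4)(1+\gamma_5)-1\leq(1+\gamma_8)(1+\gamma_5)-1=\gamma_{13}$ for division.

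Your write-up is somewhat more complete than the paper's in three places. You justify the factor $2$ in $2\gamma_4$ by Cauchy--Schwarz, and you treat the per-component division errors as an error vector rather than as one scalar factor. You also explicitly produce the middle form $\delta_m$, which the paper states but does not derive.
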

\begin{proof}
See Appendix~\ref{appendix:example:lu-quaternion}.
\end{proof}

Since the bounds in Lemma~\ref{lem:quaternion} hold with the relative error
factor on either the right, the left, or the middle, the same constants can be used in the
kernel models below regardless of which side is required by the corresponding
quaternion operation.
This point is essential because quaternion multiplication is not commutative.

The resulting componentwise kernel constants are
\[
 \epsdot(n)=2\gamma_{n+3},\quad
 \epsadd=\macheps,\quad
 \epsmult=2\gamma_4,\quad
 \epsdiv=\gamma_{13},
\]
and
\[
 \epsgemm(n)=
 \begin{cases}
 2\gamma_{n+3}, & \alpha=0,\\
 2\gamma_{n+4}, & \alpha\neq0,
 \end{cases}
 \qquad
 \epsgemmadd=
 \begin{cases}
 0, & \alpha=0,\\
 \macheps, & \alpha=1,\\
 2\gamma_4+\macheps+2\macheps\gamma_4,
 & \alpha\notin \{0,1\}.
 \end{cases}
\]

The triangular solve bounds follow from Lemmas~\ref{lem:triangular-solver-xdot} and~\ref{lem:triangular-solver-xaxpy}.
For Algorithm~\ref{alg:trsv-dot}, we have
\begin{equation*}
\begin{split}
\epstrsv(n)
&=\max\left\{2\gamma_{n+2},
\frac{1}{(1-\gamma_{13})(1-\macheps)}-1\right\}
\leq\max\left\{2\gamma_{n+2},
\frac{\gamma_{14}}{1-\gamma_{13}}\right\}.
\end{split}
\end{equation*}
For Algorithm~\ref{alg:trsv-axpy}, we obtain
\begin{equation*}
\begin{split}
\epstrsv(n)
&=\max\left\{
\frac{1+2\gamma_4}{(1-\macheps)^{n-2}}-1,
\frac{1}{(1-\gamma_{13})(1-\macheps)^{n-1}}-1
\right\}\\
&=\frac{1}{(1-\gamma_{13})(1-\macheps)^{n-1}}-1
\leq\frac{1}{1-\gamma_{13}}\gamma_{n+12}.
\end{split}
\end{equation*}
For the non-block LU factorization, substitution into the \(n_b=1\) case of
Corollary~\ref{cor:block-lu-componentwise} proves
\begin{equation*}
\begin{split}
\epstrfvec(n)
&=\frac{1}{(1-\gamma_{13})(1-\macheps)^{n-1}}-1
=\frac{\gamma_{n-1}+\gamma_{13}}{1-\gamma_{13}}
\leq\frac{1}{1-\gamma_{13}}\gamma_{n+12},
\end{split}
\label{eq:quaternion-getf2-right}
\end{equation*}
for Algorithm~\ref{alg:block-LU}, and
\begin{equation*}
\begin{split}
\epstrfvec(n)
&=\max\left\{2\gamma_{n+3},
\frac{\gamma_{13}}{1-\gamma_{13}}\right\}
\frac{1}{1-\macheps}+\frac{\macheps}{1-\macheps}
\leq\max\left\{2\gamma_{n+4},
\frac{\gamma_{14}}{1-\gamma_{13}}\right\}.
\end{split}
\label{eq:quaternion-getf2-left}
\end{equation*}
for Algorithm~\ref{alg:block-LU-2}.
All four bounds require \(\gamma_{13}<1\).
Furthermore, if \(\gamma_{13}\leq 1/2\), which is ensured by \(\macheps\leq 1/33\), then the factor \(1/(1-\gamma_{13})\) is at most \(2\).
We summarize these simplified bounds in Table~\ref{table:basicop-gamman-quaternion}.

\begin{table}[!tb]
\centering
\renewcommand{\arraystretch}{1.5}
\caption{Floating point error bounds for basic quaternion arithmetic operations
using our \(\gamman\) notation. 
The kernel-level constants satisfy the componentwise assumptions in 
Table~\ref{tab:notation-original}; the non-block LU bounds follow from 
Corollary~\ref{cor:block-lu-componentwise} with \(n_b=1\).
For simplicity, we assume that \(AB+\alpha C\) is evaluated by first
computing the matrix product \(AB\), then forming \(\alpha C\), and finally
adding the two matrices.
For triangular solver and non-block LU factorization, we additionally assume
\(\gamma_{13}\leq1/2\), which holds in particular for
\(\macheps\leq1/33\).}
\label{table:basicop-gamman-quaternion}
\begin{tabular}{c|c|c}
\hline
Operations & Algo & Floating-point error bounds \\
\hline
dot product & & \(\epsdot(n)=2\gamma_{n+3}\) \\
\hline
addition & & \(\epsadd=\macheps\) \\
\hline
multiplication & & \(\epsmult=2\gamma_4\) \\
\hline
division & & \(\epsdiv=\gamma_{13}\) \\
\hline
\shortstack[c]{matrix--matrix\\ multiplication} & & \(\begin{gathered}
\epsgemm(n)=
\begin{cases}
2\gamma_{n+3}, & \alpha=0,\\
2\gamma_{n+4}, & \alpha\neq0,
\end{cases}\\
\epsgemmadd=
\begin{cases}
0, & \alpha=0,\\
\macheps, & \alpha=1,\\
2\gamma_4+\macheps+2\macheps\gamma_4,
& \alpha\notin \{0,1\}
\end{cases}
\end{gathered}\) \\
\hline
\multirow{2}{*}{triangular solver}
& Algo~\ref{alg:trsv-dot} & \(\epstrsv(n)=2\gamma_{\max\{n+2,14\}}\) \\
& Algo~\ref{alg:trsv-axpy} & \(\epstrsv(n)=2\gamma_{n+12}\) \\
\hline
\multirow{2}{*}{\shortstack[c]{non-block LU\\ factorization}}
& Algo~\ref{alg:block-LU} & \(\epstrfvec(n)=2\gamma_{n+12}\) \\
& Algo~\ref{alg:block-LU-2} & \(\epstrfvec(n)=2\gamma_{\max\{n+4,14\}}\) \\
\hline
\end{tabular}
\end{table}

\subsubsection{Improved bounds based on Kahan compensated summation}\label{remark:Kahan}
    Kahan compensated summation~\cite{Kahan1965} is a classical technique for 
    improving the accuracy of floating point summation.
    It introduces a compensation term to recover low-order bits lost during rounding,
    thereby reducing the leading error term from \(\bigO(n\macheps)\) to \(\bigO(\macheps)\).
    Based on compensated summation, a more accurate floating point dot 
    product can be constructed by first computing the floating-point 
    products and then applying Kahan compensated summation to the resulting sequence.
    Compared with the standard recursive dot product, this approach 
    suppresses the linear growth of the leading rounding error with respect 
    to the vector length.
    
    For Kahan compensated summation, \cite[(4.9)]{Higham2002} gives the
    error bound \(2\macheps+\bigO(n\macheps^2)\) for summing a sequence
    of floating-point numbers.
    For a dot product, the products are first formed in floating-point
    arithmetic, introducing an additional relative error bounded by
    \(\macheps\).
    Combining the multiplication errors with the compensated summation
    errors therefore gives \(\epsdot(n) = 3\macheps+\bigO(n\macheps^2)\).
    If matrix--matrix multiplication is implemented as a straightforward 
    collection of such dot products followed by the usual floating-point 
    matrix update, the two error sources in Table~\ref{tab:notation-original}
    satisfy
    \[
    \epsgemm(n)=
    \begin{cases}
    3\macheps+\bigO(n\macheps^2), & \alpha=0,\\
    4\macheps+\bigO(n\macheps^2), & \alpha\neq0,
    \end{cases}
    \qquad\epsgemmadd=
    \begin{cases}
    0, & \alpha=0,\\
    \macheps, & \alpha=1,\\
    2\macheps+\macheps^2, & \alpha\notin \{0,1\}.
    \end{cases}
    \]
    For the LU updates, i.e., \(\alpha=1\), then
    \(\epsgemm(n)=4\macheps+\bigO(n\macheps^2)\) and
    \(\epsgemmadd=\macheps\).
    Consequently, when Algorithm~\ref{alg:trsv-dot} is implemented using 
    Kahan compensated dot products, its error bound satisfies \(\epstrsv(n) = 3\macheps+\bigO(n\macheps^2)\).
    Detailed derivations of these bounds are provided in Appendix~\ref{appendix:example:lu-Kahan}.

    It is worth noting that the effectiveness of compensated summation 
    depends on the source of the dimension-dependent factor in the error 
    bound.
    For Algorithm~\ref{alg:trsv-axpy}, which is based on \texttt{XAXPY} 
    operations, the factor \(n\) in the leading error term results from the 
    accumulation of errors over successive \texttt{XAXPY} updates rather 
    than from a long summation.
    Hence, applying Kahan compensated summation does not remove this 
    dependence on \(n\).
    A similar phenomenon occurs for Algorithm~\ref{alg:block-LU} for LU 
    factorization.
    Although compensated summation can improve the accuracy of each 
    individual matrix--matrix multiplication, Algorithm~\ref{alg:block-LU} 
    performs multiple successive trailing-matrix updates, and the factor \(p-1\)
    arising from the accumulation of these update errors remains.
    Thus, its leading error bound still depends on the number of block updates.

    In contrast, Algorithm~\ref{alg:block-LU-2} accumulates the 
    contributions from the preceding blocks within matrix--matrix 
    multiplications whose inner dimensions increase with the iteration index.
    Applying Kahan compensated summation to the dot products underlying 
    these matrix--matrix multiplications reduces the corresponding leading 
    error from \(O(n\macheps)\) to \(O(\macheps)\).
    Therefore, when Kahan compensated summation is used for the dot products 
    and matrix--matrix multiplications in Algorithm~\ref{alg:block-LU-2}, 
    together with Algorithm~\ref{alg:trsv-dot} for the triangular solves, 
    the dependence on \(n\) can be removed from the leading term of the 
    resulting error bound.
    The corresponding floating-point error bounds are summarized in Table~\ref{table:lu}.

    We emphasize that Kahan compensated summation is used here only as an illustrative example.
    The same trackable analysis can be readily combined with other accurate
    summation techniques.
    For instance, doubly compensated summation employs an additional level of compensation
    to further reduce the higher-order rounding errors remaining in Kahan compensated summation.
    More generally, once an error bound for a particular summation or dot product
    scheme is available, it can be incorporated directly into the assumptions of
    our framework, allowing its effect on different algorithmic variants to be analyzed
    without repeating the rounding error analysis of the entire algorithm.
    Similar considerations apply to other accurate summation algorithms, such as iFastSum~\cite{ZH2009}.

\begin{table}[!tb]
\centering
\renewcommand{\arraystretch}{1.5}
\caption{Comparison between componentwise error bounds for the triangular
solver and LU factorization with and without Kahan compensated summation.
The constants are substituted into Corollary~\ref{cor:block-lu-componentwise}.
The symbol \(\star\) indicates that the dot products underlying XDOT and XGEMM
are evaluated using Kahan compensated summation.  For standard real arithmetic,
the kernel constants are those in Table~\ref{table:basicop-gamman-real}.
Boldface indicates the algorithm with the smallest error bound for each operation.}
\label{table:lu}
\resizebox{\textwidth}{!}{%
\begin{tabular}{c|c|c}
\hline
Operations & Algorithms & Floating-point error bounds \\
\hline
\multirow{3}{*}{triangular solver}
& Algo~\ref{alg:trsv-dot} & \(\epstrsv(n)=\max\{\gamma_{n-1},\gamma_2\}\) \\
& Algo~\ref{alg:trsv-axpy} & \(\epstrsv(n)=\gamma_n\) \\
& \textbf{Algo~\ref{alg:trsv-dot}\(^\star\)} & \(\epstrsv(n)=3\macheps+\bigO(n\macheps^2)\) \\
\hline
\multirow{3}{*}{\begin{tabular}[c]{@{}c@{}}non-block\\ LU factorization\end{tabular}}
& Algo~\ref{alg:block-LU} (\(n_b=1\)) & \(\epstrfvec(n)=\gamma_{n+1}\) \\
& Algo~\ref{alg:block-LU-2} (\(n_b=1\)) & \(\epstrfvec(n)=\gamma_{n+1}\) \\
& \textbf{Algo~\ref{alg:block-LU-2}\(^\star\) (\(n_b=1\))} & \(\epstrfvec(n)=5\macheps+\bigO(n\macheps^2)\) \\
\hline
\multirow{4}{*}{\begin{tabular}[c]{@{}c@{}}block\\ LU factorization\end{tabular}}
& Algo~\ref{alg:block-LU}+Algo~\ref{alg:trsv-dot}/~\ref{alg:trsv-axpy}
& \(\epstrf(n,n_b,p)=\gamma_{n_b+p}\) \\
& Algo~\ref{alg:block-LU-2}+Algo~\ref{alg:trsv-dot}/~\ref{alg:trsv-axpy}
& \(\epstrf(n,n_b,p)=\gamma_{n-n_b+2}\) \\
& Algo~\ref{alg:block-LU}\(^\star\)+Algo~\ref{alg:trsv-dot}\(^\star\)
& \(\epstrf(n,n_b,p)=(p+4)\macheps
+\bigO((n_b+p^2)\macheps^2)\) \\
& \textbf{Algo~\ref{alg:block-LU-2}\(^\star\)+Algo~\ref{alg:trsv-dot}\(^\star\)}
& \(\epstrf(n,n_b,p)=6\macheps+\bigO(n\macheps^2)\) \\
\hline
\end{tabular}}
\end{table}

\subsubsection{Improved bounds based on the Ozaki matrix multiplication}
\label{remark:ozaki}
Ozaki, Ogita, Oishi, and Rump~\cite{Ozaki2012} proposed an accurate
matrix--matrix multiplication based on error-free splitting.
This approach improves the accuracy of standard matrix multiplication while 
performing the dominant computational work using highly optimized 
\texttt{XGEMM} routines in the working precision.

Let \(C\) denote the result obtained with \(k\) splitting levels and
faithfully rounded final summation.
Under the row- and column-wise magnitude condition in~\cite[Corollary~1]{Ozaki2012},
\begin{equation} \label{eq:ozaki-gemm-bound}
 \abs{C-AB}\leq \varepsilon_{\mathrm{Oz}}(n)\abs A\abs B,
 \qquad\varepsilon_{\mathrm{Oz}}(n)
 =2^{(\beta-h)(k-1)+2\omega+1}\gammanh+2\macheps,
\end{equation}
where
\[
h=-\log_2\macheps,\qquad \beta=\left\lceil\frac{\log_2n+h}{2}\right\rceil.
\]
Here \(\omega>0\) characterizes the magnitude spread of the
entries of \(A\) and \(B\), with the relevant magnitude ratios
bounded between \(2^{-\omega}\) and \(2^\omega\);
see~\cite[(42)]{Ozaki2012}.
It is straightforward to verify that \(\beta-h<0\) if \(4n\macheps\leq 1\).
Therefore, one may increase \(k\) to decrease the \(\bigO(n\macheps)\) term 
toward \(\bigO(\macheps)\), although a larger \(k\) also raises the 
computational cost.
Consequently, Ozaki multiplication can yield a substantially smaller error than 
the standard \(n\macheps/\bigl(1-n\macheps\bigr)\) level matrix multiplication.

We illustrate this improvement for block LU by using Ozaki multiplication
only for the trailing-update \texttt{XGEMM}.
For simplicity, the non-block panel factorization and triangular solve are bounded by \(\gammanh\) by~\cite{Higham2002}.
If the update with the trailing matrix is performed by ordinary floating point 
addition, then
\[
\epsgemm(n)=(1+\macheps)\varepsilon_{\mathrm{Oz}}(n)+\macheps,
\qquad
\epsgemmadd=\macheps.
\]

For Algorithm~\ref{alg:block-LU}, Corollary~\ref{cor:block-lu-componentwise} then gives
\begin{equation} \label{eq:ozaki-getrf-alg1}
 \abs{\Delta(\LL\UU)^{(\mathtt{getrf})}(n,n_b)}
 \leq\left(\frac{1+\max\left\{
 (1+\macheps)\varepsilon_{\mathrm{Oz}}(n_b)+\macheps,
 \gamma_{n_b}^{\mathrm h}\right\}}{(1-\macheps)^{p-1}}-1\right)\abs{\LLhat}\abs{\UUhat}.
\end{equation}
For fixed \(n_b\), \(\varepsilon_{\mathrm{Oz}}(n_b)\) is independent of \(p\).
Consequently, even if Ozaki multiplication reduces its matrix-product
contribution to \(\bigO(\macheps)\), the repeated updates still produce an
\(\bigO(p\macheps)\) leading term through the factors associated with the
successive additions to the trailing matrix.

For Algorithm~\ref{alg:block-LU-2}, Corollary~\ref{cor:block-lu-componentwise} 
yields
\begin{equation} \label{eq:ozaki-getrf-alg2}
 \abs{\Delta(\LL\UU)^{(\mathtt{getrf})}(n,n_b)}
 \leq\left(\frac{1+\max\left\{
 (1+\macheps)\varepsilon_{\mathrm{Oz}}\bigl((p-1)n_b\bigr)
 +\macheps,
 \gamma_{n_b}^{\mathrm h}\right\}}{1-\macheps}-1\right)
 \abs{\LLhat}\abs{\UUhat},
\end{equation}
and this makes it possible to improve the bound to \(\bigO\bigl(n_b\macheps\bigr)\).
Note that the choice of \(n_b\) is typically independent of \(n\).

\subsubsection{Improved bounds for IEEE arithmetic}
\label{remark:ufp}
We now return to standard real floating-point arithmetic, assuming rounding to 
nearest and the absence of underflow and overflow.
The preceding results are obtained by substituting kernel-level error bounds 
into Corollary~\ref{cor:block-lu-componentwise}.
The improved analysis of Rump and Jeannerod in~\cite{RJ2014} is of a different 
nature and cannot, in general, be recovered from that corollary: decomposing 
the computation into separate \texttt{XGEMM}, triangular solve, and panel 
factorization bounds introduces products of local error factors and therefore 
destroys the purely linear dependence on \(\macheps\).

Instead, we return to the trackable identities in Theorems~\ref{thm:block-lu-right} and~\ref{thm:block-lu-left},
and bound the complete arithmetic expressions that produce the entries of the 
computed factors.
The improved analysis of IEEE arithmetic LU is based on~\cite[Lemma~4.1]{RJ2014}.
If
\[
 y=\frac{c-\sum_{r=1}^{k-1}a_rb_r}{b_k}
\]
is evaluated in floating point arithmetic, then, in the absence of underflow
and overflow and independently of the order of evaluation, the computed
\(\widehat y\) satisfies
\[
 b_k\widehat y(1+\theta_k^{(0)})=c-\sum_{r=1}^{k-1}a_rb_r(1+\theta_k^{(r)}),
 \qquad\abs{\theta_k^{(r)}}\le k\macheps.
\]
When \(b_k=1\), the bound improves to \((k-1)\macheps\).
It is important that the terms involved on the right-hand side of~\eqref{eq:blocklu:getrf-trackable}
and~\eqref{eq:blocklu-2:getrf-trackable} cannot be bounded separately,
since~\cite[Lemma~4.1]{RJ2014} cannot provide independent
GEMM,
matrix addition, and \texttt{XGETF2} bounds having the same purely linear constant.

This result can be applied directly to the entrywise expressions underlying
LU factorization.
Theorems~\ref{thm:block-lu-right} and~\ref{thm:block-lu-left} have already given the global trackable backward relation
\[
    \Acal+\Delta(\LL\UU)^{(\mathtt{getrf})}(n,n_b)
    =\LLhat\UUhat.
\]
Rather than bounding the individual terms in
\(\Delta(\LL\UU)^{(\mathtt{getrf})}\) separately as in
Corollary~\ref{cor:block-lu-componentwise}, we regard each computed entry as
an evaluation, in the order prescribed by the particular LU variant, of the
same Gaussian elimination expression.
For the \(k\)th elimination step, the entries in the \(k\)th row of \(\widehat U\)
involve no division and therefore satisfy the \((k-1)\macheps\) bound of
\cite[Lemma~4.1]{RJ2014}; the entries below the diagonal in the \(k\)th
column of \(\widehat L\) involve one division and satisfy the
\(k\macheps\) bound.
Consequently the perturbation in Theorems~\ref{thm:block-lu-right} 
and~\ref{thm:block-lu-left} obeys exactly the Rump--Jeannerod bound shown 
in~\cite[Theorem~4.2]{RJ2014}:
\[
    \abs{\Delta(\LL\UU)^{(\mathtt{getrf})}(n,n_b)}
    \le n\macheps\abs{\LLhat}\abs{\UUhat},
\]
for \(\Acal\in\mathbb F^{m\times n}\), \(m\ge n\).  If
\(m=n\), the sharper row-dependent result is
\[
    \abs{\Delta(\LL\UU)^{(\mathtt{getrf})}(n,n_b)}
    \le
    \operatorname{diag}\bigl(0,\macheps,\ldots,(n-1)\macheps\bigr)
    \abs{\LLhat}\abs{\UUhat}
    \le (n-1)\macheps\abs{\LLhat}\abs{\UUhat}.
\]
Thus Theorems~\ref{thm:block-lu-right} and~\ref{thm:block-lu-left} supply the trackable global perturbation,
while the whole-expression result of Rump and Jeannerod supplies a sharper
bound for that perturbation than is obtained by applying the kernel-level
assumptions of Table~\ref{tab:notation-original} separately.

Crucially, \cite[Lemma~4.1]{RJ2014} is independent of the order of evaluation.
Hence the same final bound applies to Algorithms~\ref{alg:block-LU} and
\ref{alg:block-LU-2}, including their blocked and non-blocked specializations:
the two algorithms organize the arithmetic differently, but this only changes
the evaluation order of the underlying Gaussian elimination expressions.
This also explains why this result should be derived from Theorems~\ref{thm:block-lu-right} and~\ref{thm:block-lu-left}
rather than from Corollary~\ref{cor:block-lu-componentwise}: the latter is
designed to preserve kernel-level information and distinguish algorithmic
variants, whereas the Rump--Jeannerod whole-expression model deliberately
retains enough arithmetic structure to obtain a purely linear bound in
\(\macheps\).

\section{Concluding Remarks}
\label{sec:conclusions}
In this work, we develop a framework for trackable and modular rounding error analysis.
A key component is the notation
\(\gamman=(1-\macheps)^{-n}-1\).
It removes the restriction \(n\macheps<1\) imposed by the standard
\(\gammanh\)-notation.
This does not mean that all error bounds are free of roundoff assumptions.
For instance, the complex and quaternion bounds in
Sections~\ref{remark:ourgamman} and~\ref{remark:quaternion} require
\(\sqrt2\gamma_5<1\) and \(\gamma_{13}<1\), respectively, and stronger
requirements \(\sqrt2\gamma_5\leq 1/2\) and \(\gamma_{13}\leq 1/2\) are adopted when the constants are further simplified.
Importantly, these conditions depend only on \(\macheps\), not on the problem dimension
\(n\), and therefore can be substantially less restrictive than \(n\macheps<1\)
in large-scale applications.
Meanwhile, the notation preserves useful algebraic properties for accumulated
rounding errors and naturally incorporates errors arising from division.
As a result, it is particularly effective for large-scale settings or computations with low precision.

A main feature of the framework is that it retains the source and propagation
of individual rounding errors throughout the analysis, i.e., the rounding errors are trackable.
This information is generally lost when local errors are immediately collapsed
into a single final bound.
As a result, the framework can distinguish mathematically equivalent
algorithmic variants that organize their computational kernels differently.
The analysis of triangular solves and LU factorization illustrates this point.
It identifies which operations contribute to the final error and how those
contributions depend on the particular algorithmic variant.

A further advantage is the modularity, i.e., the separation between the structural error analysis
and the error bounds associated with individual computational kernels.
Once the structural analysis has been established, alternative kernel-level
bounds can be substituted without repeating the entire derivation.
For example, bounds for compensated summation or more accurate dot products
can be inserted directly to quantify their effect on the overall algorithm.
It should be noted that this modularity is conditional on the local error 
models in Table~\ref{tab:notation}.
If these structural assumptions change, rather than only the bounds assigned to 
them, the propagation analysis and all resulting error bounds must be revised accordingly.
This modularity also makes the framework suitable for comparing different
precision assignments and arithmetic implementations.
The trackable and modular viewpoint is not limited to rounding error analysis 
and can also be applied to computational and, in particular, communication 
complexity.
By tracking the costs of individual computational kernels and their 
contributions to the overall algorithm, one can identify the main sources of 
computational and communication costs and assess the effects of replacing 
specific kernels.
Such analyses are often more straightforward than rounding error analysis, 
where local errors interact and propagate through subsequent computations in 
a more intricate manner.
We therefore expect this framework to be useful for analyzing and
designing numerical algorithms in increasingly diverse arithmetic
environments on rapidly changing hardware.

\section*{Acknowledgments}
The authors thank Bowen Gao and Zhuang-Ao He for helpful discussions.

M.~Shao is supported in part by the National Key R\&D Program of China under grant No.~2023YFB3001603.
E.~Carson and Y.~Ma are supported by the European Union (ERC, inEXASCALE, 101075632).
Views and opinions expressed are those of the authors only and do not necessarily reflect those of the European Union or the European Research Council.
Neither the European Union nor the granting authority can be held responsible for them.
E.~Carson and Y.~Ma additionally acknowledge support from the Charles University Research Centre program No. UNCE/24/SCI/005.

\bibliographystyle{plainurl}
\bibliography{ref}

\appendix
\section{Proofs for Section~\ref{example:triangular-solver}}
\label{appendix:example:triangular}

\subsection{Proofs for the \texttt{XDOT}-based algorithm}
\label{subsec:triangular-xdot}
\begin{proof}[Proof of Lemma~\ref{lem:triangular-solver-xdot}]
For \(i<n\),~\eqref{eq:trsv-dot:div}--\eqref{eq:trsv:sub}
give
\begin{equation}
\begin{split}
&b_i-\bigl(U_{i,i+1:n}+\Delta U^{(\mathtt{dot})}_{i,i+1:n}(n-i)\bigr)\hat x_{i+1:n}
=(1+\delta_i^{(\mathtt{add})})^{-1}(1+\delta_i^{(\mathtt{div})})^{-1}U_{i,i}\hat x_i.
\end{split}
\end{equation}
Then defining \(\Delta U_{i,j}^{(\mathtt{trsv})}(n)\) by~\eqref{eq:lem-trsv-dot-trackable}
proves \((U+\Delta U^{(\mathtt{trsv})}(n))\hat x=b\).
The same argument works for \(i=n\) with
\(\delta_n^{(\mathtt{add})}=0\) and with no dot-product term.
\end{proof}

\subsection{Proofs for the \texttt{XAXPY}-based algorithm}
\label{subsec:triangular-xaxpy}
\begin{proof}[Proof of Lemma~\ref{lem:triangular-solver-xaxpy}]
Repeated application of~\eqref{eq:trsv:axpy} gives
\begin{equation}
\begin{split}
\hat s_i
={}&(1+\delta_{i,i+1}^{(\mathtt{add})})\cdots
(1+\delta_{in}^{(\mathtt{add})})b_i\\
&-\sum_{j=i+1}^n
(1+\delta_{i,i+1}^{(\mathtt{add})})\cdots
(1+\delta_{ij}^{(\mathtt{add})})
(1+\delta_{ij}^{(\mathtt{mult})})U_{i,j}\hat x_j.
\end{split}
\label{eq:lem-proof:hatsi}
\end{equation}
Moreover, from~\eqref{eq:trsv-dot:div},
\begin{equation}
    \widehat{s}_i=(1+\delta_i^{(\mathtt{div})})^{-1}U_{i,i}\widehat{x}_i.
\end{equation}
Substituting this relation into~\eqref{eq:lem-proof:hatsi} and multiplying
\((1+\delta_{in}^{(\mathtt{add})})^{-1}\cdots
(1+\delta_{i,i+1}^{(\mathtt{add})})^{-1}\) on the left yields
\begin{equation}
\begin{split}
b_i={}&(1+\delta_{in}^{(\mathtt{add})})^{-1}\cdots
(1+\delta_{i,i+1}^{(\mathtt{add})})^{-1}
(1+\delta_i^{(\mathtt{div})})^{-1}U_{i,i}\hat x_i\\
&+\sum_{j=i+1}^{n}
(1+\delta_{in}^{(\mathtt{add})})^{-1}\cdots
(1+\delta_{i,j+1}^{(\mathtt{add})})^{-1}
(1+\delta_{ij}^{(\mathtt{mult})})U_{i,j}\hat x_j.
\end{split}
\label{eq:lem-proof:Uii+Uij}
\end{equation}
Then defining \(\Delta U_{i,j}^{(\mathtt{trsv})}(n)\) by~\eqref{eq:lem-trsv-axpy-trackable}
gives \((U+\Delta U^{(\mathtt{trsv})}(n))\hat x=b\).
\end{proof}

\section{Proofs for Section~\ref{example:lu}}

\subsection{Proof for the right-looking block LU algorithm}
\label{appendix:example:lu-proof-right}
\begin{proof}[Proof of Theorem~\ref{thm:block-lu-right}]
Repeated application of~\eqref{eq:lem4:blocklu:update-trail} gives
\begin{equation}
\begin{split}
    \Acalhat^{(i-1)}_{i:q,i}={}& \Acal_{i:q,i}
    - \sum_{k=1}^{i-1}\LLhat_{i:q,k}\UUhat_{k,i}
    - \sum_{k=1}^{i-1}\bigl(\Delta(\LL_{i:q,k}\UU_{k,i})^{(\mathtt{gemm})}(n_b)\bigr)^{(k)}
    - \sum_{k=0}^{i-2}\bigl(\Delta\Acal^{(k)}_{i:q,i}\bigr)^{(\mathtt{gemm})}.
    \label{eq:blocklu:update-expanded}
\end{split}
\end{equation}
Combining this identity~\eqref{eq:blocklu:update-expanded} 
with~\eqref{eq:lem4:blocklu:trf2} gives~\eqref{eq:blocklu:getrf-trackable}.
Repeated application of~\eqref{eq:lem4:blocklu:update-trail} applied to the 
off-diagonal part of the \(i\)th block row gives
\begin{equation}
\begin{split}
\Acalhat^{(i-1)}_{i,i+1:p}
={}&\Acal_{i,i+1:p}
-\sum_{k=1}^{i-1}\LLhat_{i,k}\UUhat_{k,i+1:p}
-\sum_{k=1}^{i-1}
\bigl(\Delta(\LL_{i,k}\UU_{k,i+1:p})^{(\mathtt{gemm})}(n_b)\bigr)^{(k)} \\
&-\sum_{k=0}^{i-2}
\bigl(\Delta\Acal^{(k)}_{i,i+1:p}\bigr)^{(\mathtt{gemm})}.
\label{eq:blocklu:update-expanded-off}
\end{split}
\end{equation} 
On the other hand,~\eqref{eq:lem4:blocklu:trsm} implies
\[
\Acalhat^{(i-1)}_{i,i+1:p}
=\LLhat_{i,i}\UUhat_{i,i+1:p}
-\Delta(\LL_{i,i}\UU_{i,i+1:p})^{(\mathtt{trsv})}(n_b).
\] 
Together with~\eqref{eq:blocklu:update-expanded-off} and~\eqref{eq:blocklu:getrf-trackable}, we have~\eqref{eq:blocklu:getrf-trsv} and~\eqref{eq:blocklu:getrf}.

When \(n_b=1\), from~\eqref{eq:blocklu:div}, we have
\begin{equation}
\Acalhat^{(i-1)}_{j,i} + \bigl(1-(1+\delta_{j,i}^{(\mathtt{div})})^{-1}\bigr)
\LLhat_{j,i}\UUhat_{i,i} = \LLhat_{j,i}\UUhat_{i,i},
\end{equation}
which concludes the proof for \(n_b=1\) by using~\eqref{eq:blocklu:getrf-trackable},~\eqref{eq:blocklu:getrf-trsv}, and~\eqref{eq:blocklu:getrf}.
\end{proof}

\subsection{Proof for the left-looking block LU algorithm}
\label{appendix:example:lu-proof-left}
\begin{proof}[Proof of Theorem~\ref{thm:block-lu-left}]
    From~\eqref{eq:lem4:blocklu-2:gemm}, the current block column satisfies
    \begin{align*}
        \Acalhat_{i:q,i}
        ={}& \Acal_{i:q,i}
        -\LLhat_{i:q,1:i-1}\UUhat_{1:i-1,i}
        -\Delta\bigl(\LL_{i:q,1:i-1}\UU_{1:i-1,i}\bigr)^{(\mathtt{gemm})}
        \bigl((i-1)n_b\bigr) \\
        &-\bigl(\Delta\Acal_{i:q,i}\bigr)^{(\mathtt{gemm})}.
    \end{align*}
    Combining this relation with~\eqref{eq:lem4:blocklu:trf2} gives
    \eqref{eq:blocklu-2:getrf-trackable}.
    For the off-diagonal part of the \(i\)th block row,~\eqref{eq:lem4:blocklu-2:row-gemm} and~\eqref{eq:lem4:blocklu:trsm} give
    \begin{align*}
        \Acal_{i,i+1:p}={}&\LLhat_{i,1:i-1}\UUhat_{1:i-1,i+1:p}
        +\LLhat_{i,i}\UUhat_{i,i+1:p} \\
        &+\Delta(\LL_{i,1:i-1}\UU_{1:i-1,i+1:p})^{(\mathtt{gemm})}
        \bigl((i-1)n_b\bigr)\\
        &+\bigl(\Delta\Acal_{i,i+1:p}\bigr)^{(\mathtt{gemm})}
        -\Delta(\LL_{i,i}\UU_{i,i+1:p})^{(\mathtt{trsv})}(n_b),
    \end{align*}
    which proves~\eqref{eq:blocklu-2:getrf-trsv} by rearranging this equality.

    Finally, for \(n_b=1\), similarly to the proof for 
    Algorithm~\ref{alg:block-LU} with \(n_b=1\), by~\eqref{eq:blocklu:div}, 
    we can conclude the proof.
\end{proof}

\subsection{Proof of Corollary~\ref{cor:block-lu-componentwise}}
\label{appendix:cor:block-lu-componentwise}
\begin{proof}[Proof of Corollary~\ref{cor:block-lu-componentwise}]
We apply the assumptions in Tables~\ref{tab:notation}
and~\ref{tab:notation-original} directly to the perturbation expressions in
Theorems~\ref{thm:block-lu-right} and~\ref{thm:block-lu-left}.
It suffices to consider the off-diagonal block row \(i,i+1:p\).
The block column case follows in the same way, with the panel
factorization error in place of the triangular solve error.

We first consider the right-looking algorithm, i.e., Algorithm~\ref{alg:block-LU}.
For \(1\leq i\leq p-1\) and \(1\leq k\leq i-1\),
Table~\ref{tab:notation-original} gives
\begin{equation}
\begin{split}
\abs{\bigl(\Delta(\LL_{i,k}\UU_{k,i+1:p})^{(\mathtt{gemm})}(n_b)\bigr)^{(k)}}
&\leq \epsgemm(n_b)\abs{\LLhat_{i,k}}\abs{\UUhat_{k,i+1:p}},\\
\abs{\bigl(\Delta\Acal_{i,i+1:p}^{(k-1)}\bigr)^{(\mathtt{gemm})}}
&\leq \epsgemmadd\abs{\Acalhat_{i,i+1:p}^{(k-1)}},\\
\abs{\Delta(\LL_{i,i}\UU_{i,i+1:p})^{(\mathtt{trsv})}(n_b)}
&\leq \epstrsv(n_b)\abs{\LLhat_{i,i}}\abs{\UUhat_{i,i+1:p}}.
\end{split}
\label{eq:cor-proof-general-gemm-bounds}
\end{equation}
The matrix-addition perturbations above depend on the intermediate
blocks \(\Acalhat_{i,i+1:p}^{(k-1)}\).
To bound their contributions in Theorem~\ref{thm:block-lu-right}
by \(\abs{\LLhat}\abs{\UUhat}\), we first establish the relationship between
\(\Acalhat_{i,i+1:p}^{(k-1)}\) and \(\abs{\LLhat_{i,i}}\abs{\UUhat_{i,i+1:p}}\).
Combining~\eqref{eq:lem4:blocklu:update-trail} 
and~\eqref{eq:lem4:blocklu:trsm} with~\eqref{eq:cor-proof-general-gemm-bounds} gives
\begin{equation}
\begin{split}
(1-\epsgemmadd)\abs{\Acalhat_{i,i+1:p}^{(k-1)}}
&\leq \abs{\Acalhat_{i,i+1:p}^{(k)}}
 +(1+\epsgemm(n_b))\abs{\LLhat_{i,k}}\abs{\UUhat_{k,i+1:p}},\\
\abs{\Acalhat_{i,i+1:p}^{(i-1)}}
&\leq (1+\epstrsv(n_b))
\abs{\LLhat_{i,i}}\abs{\UUhat_{i,i+1:p}}.
\end{split}
\label{eq:cor-proof-local}
\end{equation}
Since \(\epsgemmadd<1\), repeated substitution, starting with
the second inequality, gives
\begin{equation}
\begin{split}
\abs{\Acalhat_{i,i+1:p}^{(k-1)}}
\leq{}&
\frac{1+\max\{\epsgemm(n_b),\epstrsv(n_b)\}}
{(1-\epsgemmadd)^{i-k}}
(\abs{\LLhat}\abs{\UUhat})_{i,i+1:p},
\qquad 1\leq k\leq i-1.
\end{split}
\label{eq:cor-proof-input}
\end{equation}
Substituting these bounds directly into~\eqref{eq:blocklu:getrf-trsv} gives
\begin{equation}
\begin{split}
\abs{\Delta(\LL\UU)_{i,i+1:p}^{(\mathtt{getrf})}(n,n_b)}
\leq{}&\epsgemm(n_b)
\sum_{k=1}^{i-1}\abs{\LLhat_{i,k}}\abs{\UUhat_{k,i+1:p}}\\
&+\epsgemmadd\sum_{k=0}^{i-2}
\abs{\Acalhat_{i,i+1:p}^{(k)}}
+\epstrsv(n_b)\abs{\LLhat_{i,i}}\abs{\UUhat_{i,i+1:p}}\\
\leq{}&\left(
\frac{1+\max\{\epsgemm(n_b),\epstrsv(n_b)\}}
{(1-\epsgemmadd)^{i-1}}-1
\right)(\abs{\LLhat}\abs{\UUhat})_{i,i+1:p}.
\end{split}
\label{eq:cor-proof-iteration}
\end{equation}
Applying the same argument to~\eqref{eq:blocklu:getrf-trackable} replaces
\(\epstrsv(n_b)\) by \(\epstrfvec(n_b)\).
There are at most \(p-1\) preceding updates, so these two bounds
imply~\eqref{eq:blocklu:getrf-componentwise} and~\eqref{eq:blocklu:eta}.
For \(p=1\), only the panel factorization is needed.

We then consider the left-looking algorithm, i.e., Algorithm~\ref{alg:block-LU-2}.
Using the monotonicity of \(\epsgemm\), Table~\ref{tab:notation-original}
and~\eqref{eq:blocklu-2:getrf-trsv} directly give
\begin{equation}
\begin{split}
\abs{\Delta(\LL\UU)_{i,i+1:p}^{(\mathtt{getrf})}(n,n_b)}
\leq{}&\epsgemm((p-1)n_b)
\abs{\LLhat_{i,1:i-1}}\abs{\UUhat_{1:i-1,i+1:p}}\\
&+\epsgemmadd\abs{\Acal_{i,i+1:p}}
+\epstrsv(n_b)\abs{\LLhat_{i,i}}\abs{\UUhat_{i,i+1:p}}.
\end{split}
\end{equation}
Since \eqref{eq:blocklu:getrf} implies
\[
\abs{\Acal_{i,i+1:p}}\leq
(\abs{\LLhat}\abs{\UUhat})_{i,i+1:p}
+\abs{\Delta(\LL\UU)_{i,i+1:p}^{(\mathtt{getrf})}(n,n_b)},
\]
we obtain
\begin{equation}
\begin{split}
&(1-\epsgemmadd)
\abs{\Delta(\LL\UU)_{i,i+1:p}^{(\mathtt{getrf})}(n,n_b)}\\
&\quad\leq
\bigl(\max\{\epsgemm((p-1)n_b),\epstrsv(n_b)\}
+\epsgemmadd\bigr)
(\abs{\LLhat}\abs{\UUhat})_{i,i+1:p}.
\end{split}
\label{eq:cor-proof-left}
\end{equation}
Dividing by \(1-\epsgemmadd\) gives the off-diagonal block-row
bound in~\eqref{eq:blocklu-2:getrf-componentwise-monotone}.
Applying the same argument to
\eqref{eq:blocklu-2:getrf-trackable} yields the estimate with
\(\epstrfvec(n_b)\) in place of \(\epstrsv(n_b)\).
For \(p=1\), only the panel bound is required.

For \(n_b=1\), the scalar triangular solve is exact and the
subdiagonal multipliers are computed by division.
From Table~\ref{tab:notation-original}, we have
\begin{equation}
\abs{\Delta(\LL_{ij}\UU_{jj})^{(\mathtt{div})}}
\leq\frac{\epsdiv}{1-\epsdiv}
\abs{\LLhat_{ij}}\abs{\UUhat_{jj}},\qquad i>j.
\label{eq:cor-proof-scalar-div-bound}
\end{equation}
We then set \(n_b=1\), \(p=n\), replace the terminal triangular solve error 
bound by zero, and use \(\epsdiv/(1-\epsdiv)\) for the panel bound below the 
diagonal.
The right-looking and left-looking estimates then give, 
respectively,~\eqref{eq:blocklu:getf2-componentwise} and~\eqref{eq:blocklu-2:getf2-componentwise}.
\end{proof}

\section{Proofs for Section~\ref{subsec:diff-setting}}
Throughout this subsection, \(\gamma_n=(1-\macheps)^{-n}-1\), so that
\(\gamma_j+\gamma_k+\gamma_j\gamma_k=\gamma_{j+k}\).
\subsection{Proofs for complex arithmetic (Section~\ref{remark:ourgamman})}
\label{appendix:example:lu-ourgamman}
\begin{proof}[Proof of Lemma~\ref{lem:complex}]
Similarly to the proof of~\cite[Lemma~3.5]{Higham2002}, we can 
prove~\eqref{eq:lem-complex-er:add-minus} and~\eqref{eq:lem-complex-er:times}.
It remains to prove~\eqref{eq:lem-complex-er:div}.
The proof for~\eqref{eq:lem-complex-er:div} also follows that 
of~\cite[Lemma~3.5]{Higham2002}, except that we additionally account for one rounding error arising from the final division.
\end{proof}

\subsection{Proofs for quaternion arithmetic (Section~\ref{remark:quaternion})}
\label{appendix:example:lu-quaternion}

\begin{proof}[Proof of Lemma~\ref{lem:quaternion}]
It is straightforward to obtain~\eqref{eq:lem-quaternion-er:add-minus} with the
relative perturbation on either side.
For multiplication, each component is a real dot product of length \(4\), which 
gives the stated bound for a right relative perturbation.
Since every nonzero quaternion \(z\) is invertible, from
\(\fl(z)=z(1+\delta_r)\) we may equivalently write \(\fl(z)=(1+\delta_\ell)z\), 
where \(\delta_\ell=z\delta_r z^{-1}\).
The quaternion norm is multiplicative, so \(\abs{\delta_\ell}=\abs{\delta_r}\).  
Thus the same bound holds on either side.
This proves~\eqref{eq:lem-quaternion-er:times}; the zero case is immediate.

It remains to prove~\eqref{eq:lem-quaternion-er:div}.
Assume that \(\delta_i\) is a number satisfying \(\abs{\delta_i}\leq \macheps\)
throughout this proof.
From~\eqref{eq:quaternion:div}, we have
\begin{equation}
\begin{split}
\fl\bigl(y_1^2+y_2^2+y_3^2+y_4^2\bigr)&=(y_1^2+y_2^2+y_3^2+y_4^2)(1+\theta_4)
\end{split}
\end{equation}
with \((1-\macheps)^4\leq \abs{1+\theta_4}\leq (1+\macheps)^4\).
Then
\begin{equation}
\begin{split}
\fl\Bigl(\frac{x\Bar{y}}{y_1^2+y_2^2+y_3^2+y_4^2}\Bigr) &= \frac{x\Bar{y}(1+\tilde\theta_4)}
{(y_1^2+y_2^2+y_3^2+y_4^2)(1+\theta_4)}(1+\delta_1) \\
&= \frac{x\Bar{y}}{y_1^2+y_2^2+y_3^2+y_4^2}(1+\theta_9)
\end{split}
\end{equation}
with
\begin{equation}
\begin{split}
\abs{\theta_9} &= \abs{\frac{1+\tilde\theta_4}{1+\theta_4}(1+\delta_1)-1}
\leq (1+2\gamma_4)(1+\gamma_5)-1
\leq \gamma_{13}.
\end{split}
\end{equation}
Analogous to the proof of \cite[Lemma~3.5]{Higham2002}, we first obtain the
right-relative-error representation.
Applying the same similarity argument as for multiplication, with \(z=xy^{-1}\), 
gives the left-relative-error representation with exactly the same norm bound.  
This completes the proof.
\end{proof}

\subsection{Proofs for Kahan compensated summation (Section~\ref{remark:Kahan})}
\label{appendix:example:lu-Kahan}
\begin{lemma} \label{lem:trsv-Kahan}
Let \(U\in\mathbb R^{n\times n}\) be a triangular matrix and
\(b\in\mathbb R^{n}\) be a right-hand side vector.
Assume that the triangular system is solved by 
Algorithm~\ref{alg:trsv-dot}, where the dot product in
Line~\ref{line:trsv-dot:dot} is based on Kahan compensated summation.
Then there exists a triangular matrix \(\Delta U^{(\mathtt{trsv})}(n)\),
with the same triangular structure as \(U\), such that the computed result
\(\hat{x}\) satisfies
\begin{equation} \label{eq:lem:trsv-Kahan}
    \bigl(U+\Delta U^{(\mathtt{trsv})}(n)\bigr)\hat x=b,
    \qquad
    \abs{\Delta U^{(\mathtt{trsv})}(n)}
    \leq \epstrsv(n)\abs U,
\end{equation}
where \(\epstrsv(n) = 3\macheps+\bigO(n\macheps^2)\).
\end{lemma}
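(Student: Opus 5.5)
The plan is to reuse the structural identity of Lemma~\ref{lem:triangular-solver-xdot}, which holds for any dot-product kernel satisfying~\eqref{eq:trsv:dot}, and then substitute the Kahan-based local bound. Only two ingredients are needed: a componentwise backward error bound for the Kahan-compensated dot product, and a bound on the diagonal perturbation produced by the subtraction and the division.

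\emph{Step 1: bound the dot product.} In Line~\ref{line:trsv-dot:dot} the products \(U_{i,j}\hat x_j\), \(j=i+1,\dots,n\), are first formed in floating point, giving \(U_{i,j}\hat x_j(1+\delta_{ij}^{(\mathtt{mult})})\) with \(\abs{\delta_{ij}^{(\mathtt{mult})}}\leq\macheps\). Kahan compensated summation of these \(n-i\) terms is then backward stable componentwise by~\cite[(4.9)]{Higham2002}: the computed sum equals \(\sum_j y_j(1+\mu_j)\) with \(\abs{\mu_j}\leq 2\macheps+\bigO((n-i)\macheps^2)\). Combining the two factors, \((1+\delta_{ij}^{(\mathtt{mult})})(1+\mu_j)-1\) is bounded by \(3\macheps+\bigO(n\macheps^2)\). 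Absorbing this relative factor into \(U_{i,j}\) gives~\eqref{eq:trsv:dot} with
\[
\abs{\Delta U^{(\mathtt{dot})}_{i,i+1:n}(n-i)}\leq\epsdot(n-i)\abs{U_{i,i+1:n}},\qquad \epsdot(n-i)=3\macheps+\bigO(n\macheps^2).
\]

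\emph{Step 2: invoke Lemma~\ref{lem:triangular-solver-xdot}.} The subtraction~\eqref{eq:trsv:sub} and the division~\eqref{eq:trsv-dot:div} are ordinary floating-point operations, so \(\abs{\delta_i^{(\mathtt{add})}},\abs{\delta_i^{(\mathtt{div})}}\leq\macheps<1\), and the lemma applies. It yields~\eqref{eq:lem-trsv-dot} with \(\Delta U^{(\mathtt{trsv})}\) given by~\eqref{eq:lem-trsv-dot-trackable}.

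\emph{Step 3: bound the entries of \(\Delta U^{(\mathtt{trsv})}\).} The off-diagonal entries are the dot-product perturbations from Step~1, so they are bounded by \((3\macheps+\bigO(n\macheps^2))\abs{U_{i,j}}\). The diagonal entries satisfy
\[
\abs{(1+\delta_i^{(\mathtt{add})})^{-1}(1+\delta_i^{(\mathtt{div})})^{-1}-1}\leq\gamma_2=2\macheps+\bigO(\macheps^2),
\]
by Lemma~\ref{lem:gamma_property}. Taking the entrywise maximum gives \(\epstrsv(n)=\max\{3\macheps+\bigO(n\macheps^2),\gamma_2\}=3\macheps+\bigO(n\macheps^2)\). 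The lower triangular case is identical with the loop order reversed.

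The main obstacle is Step~1. The bound~\cite[(4.9)]{Higham2002} is stated as a perturbation of each summand, and we must make sure it is genuinely componentwise, i.e.\ that each term receives its own relative perturbation rather than only an absolute bound \(\sum\abs{y_j}\). Higham's Kahan analysis does give a per-term form \(\sum_j(1+\mu_j)y_j\), so I would cite it in that form and check that the \(\bigO(n\macheps^2)\) constant is uniform in \(i\), using the monotonicity in the length \(n-i\leq n\).
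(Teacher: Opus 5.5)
Your proposal is correct and follows the paper's proof. Both arguments apply Lemma~\ref{lem:triangular-solver-xdot}, bound the diagonal by \(\gamma_2\), and get \(\epsdot=3\macheps+\bigO(n\macheps^2)\) by combining the multiplication error \(\macheps\) with Kahan's \(2\macheps+\bigO(n\macheps^2)\). The one difference is the form of the Kahan bound. You use the per-summand backward form \(\sum_j(1+\mu_j)y_j\), which is Knuth's result stated alongside the forward bound (4.9) in Higham, and which yields \(\Delta U^{(\mathtt{dot})}\) directly. The paper instead uses (4.9) in its forward form and ends with \(\abs{\fl(x\trans y)-x\trans y}\leq(3\macheps+\bigO(n\macheps^2))\abs{x}\trans\abs{y}\). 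That forward bound is enough, because any error \(e\) with \(\abs{e}\leq\epsilon\abs{x}\trans\abs{y}\) equals \(\Delta x\trans y\) for \(\Delta x_j=e\operatorname{sign}(y_j)\abs{x_j}/(\abs{x}\trans\abs{y})\), which satisfies \(\abs{\Delta x}\leq\epsilon\abs{x}\). So the point you flag as the main obstacle is not actually one.
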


\begin{proof}
From Lemma~\ref{lem:triangular-solver-xdot}, we have \((U+\Delta U^{(\mathtt{trsv})}(n))\hat{x}=b\).
For real arithmetic, the componentwise assumptions in
Table~\ref{tab:notation-original} give
\[
 \abs{\Delta U^{(\mathtt{trsv})}(n)}
 \leq \max\{\epsdot(n-1),\gamma_2\}\abs U,
\]
where the term \(\gamma_2\) bounds the diagonal perturbation generated by
one subtraction and one division.  Hence it remains only to estimate
\(\epsdot\) for the Kahan-compensated dot product.

From the analysis of Kahan compensated summation given by~\cite[(4.9)]{Higham2002}, we have
\[
\abs{\fl\bigl( \sum_{i=1}^n\fl(x_iy_i) \bigr) -  \sum_{i=1}^n\fl(x_iy_i)}
\leq \bigl(2\macheps+\bigO(n\macheps^2)\bigr)\sum_{i=1}^n\abs{\fl(x_iy_i)},
\]
where \(\fl(x_iy_i)\) satisfies that
\begin{equation*}
    \fl(x_iy_i) = x_iy_i(1+\delta_i)
    = (x_i+\Delta x_i)y_i, \qquad \abs{\delta_i}\leq \macheps\quad\text{and}\quad
    \abs{\Delta x_i}\leq \macheps\abs{x_i}.
\end{equation*}
Noticing that \(\fl\bigl( \sum_{i=1}^n\fl(x_iy_i) \bigr) = \fl(x\trans y)\), 
it then follows that
\begin{equation*}
\begin{split}
    \abs{\fl(x\trans y) - (x+\Delta x)\trans y}
    &\leq \bigl(2\macheps+\bigO(n\macheps^2)\bigr)(1+\macheps) \sum_{i=1}^n\abs{x_iy_i} \\
    &\leq \bigl(2\macheps+\bigO(n\macheps^2)\bigr) \sum_{i=1}^n\abs{x_iy_i} .
\end{split}
\end{equation*}
Furthermore, moving the term \(\Delta x\trans y\) on the left-hand side to 
the right-hand side, we have
\begin{equation}
\begin{split}
    \abs{\fl(x\trans y) - x\trans y}
    &\leq \bigl(2\macheps+\bigO(n\macheps^2)\bigr) \sum_{i=1}^n\abs{x_iy_i}
    + \macheps \sum_{i=1}^n\abs{x_i}\abs{y_i} \\
    &\leq \bigl(3\macheps+\bigO(n\macheps^2)\bigr) \abs{x}\trans\abs{y}.
\end{split}
\end{equation}
Therefore \(\epstrsv(n)=3\macheps+\bigO(n\macheps^2)\).
\end{proof}


\end{document}